\numberwithin{equation}{section}
\theoremstyle{theorem}
\newtheorem{thm}{Theorem}[section]
\newtheorem{lem}[thm]{Lemma}
\newtheorem{cor}[thm]{Corollary}
\newtheorem{prop}[thm]{Proposition}
\theoremstyle{definition}
\newtheorem{dfn}[thm]{Definition}
\theoremstyle{remark}
\newtheorem{exa}[thm]{Example}
\newtheorem{rem}[thm]{Remark}
\newtheorem{quest}[thm]{Question}
\def\L{\mathcal L}
\def\M{\mathcal M}
\def\graded{}
\let\wtilde\widetilde
\let\bar\overline
\def\O{\mathcal O}
\let\b\beta
\def\F{\mathcal F}
\def\inv{^{-1}}
\def\res#1{|_{#1}}
\def\m{\mathfrak m}
\def\Q{\mathbb Q}
\def\Z{\mathbb Z}
\def\C{\mathbb C}
\def\P{\mathbb P}
\def\w{\omega}
\DeclareMathOperator\End{End}
\DeclareMathOperator\Ext{Ext}
\DeclareMathOperator\Hom{Hom}
\DeclareMathOperator\Proj{Proj}
\DeclareMathOperator{\SL}{SL}
\DeclareMathOperator\Sym{Sym}
\DeclareMathOperator\rank{rank}
\DeclareMathOperator\Pic{Pic}
\DeclareMathOperator\Char{char}
\let\oxra\xrightarrow
\def\xrightarrow#1{\oxra{\ \,#1\ \,}}
\let\xra\xrightarrow
\let\lra\longrightarrow
\def\dlim{\varinjlim}
\newcommand{\dashedrightarrow}[1][2pt]{%
  \settowidth{\@tempdima}{$\rightarrow$}\rightarrow
  \makebox[-\@tempdima]{\hskip-1.5ex\color{white}\rule[0.5ex]{#1}{1pt}}
  \phantom{\rightarrow}
}
\let\dra\dashrightarrow 
\def\set#1{\relpenalty=10000 \binoppenalty=10000\{#1\}}
\newcommand{\supth}[1]{\ensuremath{#1^{\mathrm{th}}}}
\title{Finite $\boldsymbol{F}$-representation type for homogeneous coordinate rings of non-Fano varieties}
\author{Devlin Mallory}
\address{University of Utah, Department of Mathematics, 
155 South 1400 East, Salt Lake City, UT 84112, USA}
\email{malloryd@math.utah.edu}
\begin{document}


\maketitle

\begin{prelims}

\DisplayAbstractInEnglish

\bigskip

\DisplayKeyWords

\medskip

\DisplayMSCclass

\end{prelims}


\newpage

\setcounter{tocdepth}{1}

\tableofcontents


\section{Introduction}
For a ring $R$ of positive characteristic $p$, the module-theoretic properties of the Frobenius pushforwards $F_e^* R$ capture a great deal of information about $R$. 
For example, when $R$ is local and $F$-finite, the following hold: 
\begin{itemize}
\item $F_*^eR $ is locally free if and only if $R$ is regular.
\item $F_*^eR $ has a free $R$-summand if and only if $R$ is $F$-split.
\item The limit as $e\to \infty$ of the number of free summands of $F_*^eR$, divided by the rank $p^{e\dim R}$, is the $F$-signature of $R$, a subtle invariant of the singularities of $R$.
\end{itemize}
Each of these properties is a statement about the decomposition of $F_*^eR$ into indecomposable summands.
It is then natural to ask which summands of the $F_*^e R$ occur as $e$ varies over all of $\mathbb N$.
If only finitely many summands occur, we say that $R$ has \emph{finite $F$-representation type}, often abbreviated as FFRT.

The notion of FFRT was introduced in \cite{SVdB}, in the study of the simplicity of rings of differential operators, and has since found a range of applications in commutative algebra.

\begin{exa}
Regular local rings have FFRT since for a regular local ring $R$, we have $F_*^e R\cong R^{\oplus p^{e\dim R}}$.
Quadric hypersurface rings have FFRT since if $R$ is such a ring, then $F_*^e R$ is a Cohen--Macaulay module, and $R$ has only finitely many isomorphism classes of indecomposable Cohen--Macaulay modules (see \cite{BE}).
\end{exa}

FFRT is a strong condition on a ring $R$. For example, a ring $R$ with FFRT will have only finitely many associated primes of the local cohomology modules $H^i_I(R)$, for any ideal $I\subset R$
(see \cite{TT,HNB,DQ}).
If $R$ is the homogeneous coordinate ring of a smooth curve of genus $g$ in some embedding in projective space, $R$ will have FFRT if and only $g=0$.
(In contrast, \cite{Shibuta} showed that the  1-dimensional local rings will have FFRT over an algebraically closed or finite field.)

The study of the decompositions of $F^e_* R$ is also closely related to questions arising in algebraic geometry in positive characteristic. When $R$ is the homogeneous coordinate ring of a projective variety $X$ embedded by a very ample line bundle $\L$, $F^e_* R$ decomposes into the direct sum of the pushforwards $F^e_* \L^{i}$ for $0\leq i <p^e$. The FFRT property of a variety $X$, and even just the study of the decomposition of $F_*^e\O_X$, is of great interest. For example, the following hold: 
\begin{itemize}
\item When $X$ is an abelian variety, the decomposition of $F_*^e\O_X$ reflects the $p$-rank of $X$ (\textit{e.g.}, whether $X$ is ordinary or supersingular); see \cite{ST}.
\item If $X$ is a projective variety, $F_*L$ decomposes as the direct sum of line bundles for any invertible sheaf $L$ if and only if $X$ is a smooth toric variety; see \cite{Thomsen,Achinger}.
\item When $X$ is a del Pezzo surface of degree $5$, the summands of $F^e_*\O_X$ provide interesting new examples of indecomposable vector bundles; see \cite{Hara1}.
\item The fact that a Grassmannian of 2-dimensional quotients has FFRT, established in \cite{RSB}, reflects subtle representation-theoretic information about $\SL_2$ over a field of positive characteristic.
\end{itemize}
Whether a projective variety has a coordinate ring with FFRT is then interesting  from the point of view of not just commutative algebra, but also algebraic geometry and representation theory.
\looseness-1

Although the expectation is that FFRT should be somewhat rare, evidence and examples in higher dimension are rare, even when just considering homogeneous coordinate rings of smooth projective varieties. For example, for Fano surfaces, the following is known. 

\begin{exa}
Let $X_d$ be a del Pezzo surface of degree $d$. If $d\geq 6$, $X_d$ is toric, and thus its homogeneous coordinate rings are direct summands of polynomial rings, and hence have FFRT by 
\cite[Proposition~3.1.6]{SVdB}.
In \cite{DM2}, we show that the homogeneous coordinate rings of degree 5 del Pezzo surfaces also have FFRT.
If $d\leq 4$, then FFRT is not known for the homogeneous coordinate rings of $X_d$; in particular, it is unknown whether the hypersurface ring $k[x,y,z,w]\,/(x^3+y^3+z^3+w^3)$ has FFRT.
\end{exa}

This example concerns Fano varieties; for such varieties, the homogeneous coordinate rings will have ``mild'' (\textit{i.e.}, strongly $F$-regular) singularities. 
However, it is known that simply having mild singularities does not guarantee FFRT:
As pointed out in
\cite[Remark~3.4]{TT}, the example of \cite{SinghSwanson}, combined with the results of \cite{TT}, yields a strongly $F$-regular hypersurface UFD which does not have FFRT.

If one leaves the strongly $F$-regular setting, one might expect the FFRT property to be even rarer. The following are among the few previously known examples. 

\begin{exa}
In \cite[Theorem~1.2]{ST}, Sannai and Tanaka show that if $A$ is an abelian variety of $p$-rank $\mu_A$, then the indecomposable summands of $F^e_*\O_A$ have rank $p^{e(d-\mu_A)}$.
If $\mu_A<d$, then there are infinitely many indecomposable summands of $F_*^e\O_A$ as $e$ varies. They show, moreover, that when $\mu_A=d$, there are infinitely many distinct line bundles appearing in $F_*^e\O_A$. In either case, then,
this implies immediately that $A$ does not even have FFRT for $\O_A$, which implies that the homogeneous coordinate ring does not have \graded FFRT.
\end{exa}

\begin{exa}
In \cite[Theorem~7.2]{HO}, Hara and Ohkawa show that 2-dimensional graded surfaces obtained as section rings of $\Q$-divisors on $\P^1$ will fail to have FFRT if they are not log-terminal (and thus not $F$-regular), except potentially when the characteristic $p$ divides the coefficients of the $\Q$-divisor. 
\end{exa}

In this paper, we further confirm this expectation, by ruling out FFRT for the coordinate rings of several classes of varieties. 

\begin{thm}
Let $X$ be one of the following varieties over a perfect field $k$ of positive characteristic:
\begin{itemize}
\item a non-uniruled Calabi--Yau variety with $H^i(X,\O_X)=0$ for $i=1,\dots,\dim X-1$, 
\item a non-unirational K3 surface, 
\item a general-type complete intersection in $\P^N$ of dimension $\geq 3$.
\end{itemize}
Let $\L$ be a very ample line bundle on $X$ such that $\L^{\otimes r}\cong \w_X$ for some $r\in \Z$. Then the homogeneous coordinate ring 
$$
R(X,\L):=\bigoplus H^0(X,\L^{\otimes m})
$$
does not have \graded FFRT.
\end{thm}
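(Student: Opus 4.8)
\emph{Setup and reduction.} The plan is to argue by contradiction from the two ingredients assembled earlier: the theorem of Takagi--Takahashi relating FFRT to the ring of differential operators, and the comparison, established above, between negative-degree differential operators on a section ring and global sections of twists of $(\Sym^m\Omega_X)^\vee$. So suppose $R:=R(X,\L)$ had graded FFRT. By Takagi--Takahashi (building on Smith--Van den Bergh), FFRT forces $R$ to be a simple module over its ring of $k$-linear differential operators $D_k(R)$. Now $R$ is $\mathbb N$-graded with $R_0=k$ and, since $\L$ is very ample, $R_m\neq0$ for every $m\ge0$; the grading on $R$ is compatible with the natural grading on $D_k(R)$. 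If $D_k(R)$ had no operators of negative degree, then $R_{\ge1}$ would be a proper nonzero $D_k(R)$-submodule of $R$, contradicting simplicity; more precisely, for $0\ne f\in R_m$ an operator carrying $f$ to a unit of $R_0$ must have degree $-m$, so $D_k(R)_{-m}\neq0$ for every $m\ge1$. By the comparison result, a nonzero operator of degree $-m$ on $R$ yields, via the symbol of its part of order $j$ along $X$, a nonzero element of $H^0\!\big(X,(\Sym^j\Omega_X)^\vee\otimes\L^{\otimes(-m)}\big)$ for some $j\ge1$ (the twist being strictly negative precisely because the operator lowers the grading). Hence it suffices to prove, for each of the three families, that $H^0\!\big(X,(\Sym^j\Omega_X)^\vee\otimes\L^{\otimes i}\big)=0$ for every $j\ge1$ and every $i<0$.

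\emph{Reducing the vanishing to strong semistability.} A nonzero section in that group is an injection $\L^{\otimes(-i)}\hookrightarrow(\Sym^j\Omega_X)^\vee$, which exhibits inside $(\Sym^j\Omega_X)^\vee$ a subsheaf of $\L$-slope $(-i)(\L^{\dim X})>0$, so $\mu_{\max,\L}\!\big((\Sym^j\Omega_X)^\vee\big)>0$. On the other hand $\mu_\L(\Omega_X)=\tfrac1{\dim X}\,K_X\cdot\L^{\dim X-1}$, which equals $0$ when $\w_X\cong\O_X$ (the Calabi--Yau and K3 cases, where $r=0$) and is strictly positive when $X$ is of general type with $\w_X\cong\L^{\otimes r}$, $r>0$; in all cases $\mu_\L(\Omega_X)\ge0$. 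So if I knew $\Omega_X$ to be \emph{strongly} $\L$-semistable (semistable after every Frobenius pullback), then $(\Sym^j\Omega_X)^\vee=\Gamma^jT_X$, being a divided power of a strongly semistable sheaf, would again be strongly $\L$-semistable, so $\mu_{\max,\L}\!\big((\Sym^j\Omega_X)^\vee\big)=-j\,\mu_\L(\Omega_X)\le0$, contradicting the previous line. One really wants \emph{strong} semistability here, since in characteristic $p$ an ordinary symmetric (or divided) power of a merely semistable sheaf need not remain semistable.

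\emph{Establishing strong semistability family by family, and the main obstacle.} It remains to verify strong $\L$-semistability of $\Omega_X$ in each case, and this is the step I expect to carry the real weight. For a non-unirational K3 surface one uses stability of the tangent sheaf: $T_X$ is $\L$-stable (classical over $\mathbb C$, and valid in positive characteristic once one excludes the supersingular/unirational K3 surfaces, which is exactly what the hypothesis does), and one rules out Frobenius destabilization using the Bogomolov inequality together with $c_2(X)>0$. For a non-uniruled Calabi--Yau $X$ with $H^i(X,\O_X)=0$ for $0<i<\dim X$, one invokes a Miyaoka-style generic semipositivity statement: non-uniruledness prevents $\Omega_X$ from having a quotient sheaf of negative $\L$-degree, so $\mu_{\min,\L}(\Omega_X)\ge0=\mu_\L(\Omega_X)$, i.e.\ $\Omega_X$ is $\L$-semistable, and one promotes this to strong semistability using $K_X\equiv0$ together with Miyaoka's inequality $c_2(X)\cdot\L^{\dim X-2}\ge0$ and Langer's bounds on the instability of Frobenius pullbacks; here the cohomology vanishing is what guarantees that the affine cone over $(X,\L)$ is Cohen--Macaulay (indeed Gorenstein, as $\w_X\cong\O_X$), so that the differential-operator comparison of the first step applies in the stated form. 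For a general-type complete intersection $X\subset\P^N$ of dimension $\ge3$, one uses that $T_X$ sits inside the restriction of the stable bundle $T_{\P^N}$ and that restriction theorems (Mehta--Ramanathan, with Langer's effective versions in characteristic $p$) preserve semistability for complete intersections of dimension $\ge3$; combined with $\mu_\L(T_X)<0$ this pins down $T_X$, and hence each $\Gamma^jT_X$, as $\L$-semistable of strictly negative slope. The single hardest point throughout is exactly this characteristic-$p$ bookkeeping: semistability of $\Omega_X$ is not automatically inherited by all of its symmetric (or divided) powers, so one is forced to work with strong semistability and to invoke Langer's restriction and discriminant estimates; in characteristic zero the argument is routine.
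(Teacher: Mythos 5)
Your opening step is the gap. You assert that graded FFRT forces $R$ to be a simple $D_{R/k}$-module, citing Takagi--Takahashi. But Takagi--Takahashi's Corollary~2.10---the result the paper actually invokes---proves something strictly weaker: if $R$ has graded FFRT, then for any nonzerodivisor $x$ the localization $R[1/x]$ is generated by $1/x$ as a $D_{R/k}$-module. It says nothing about $D$-simplicity of $R$ itself. The implication ``FFRT $\Rightarrow$ $D$-simple'' that you invoke is Smith--Van den Bergh's Theorem~4.2.1, and that result additionally requires $R$ to be strongly $F$-regular. The section rings in this theorem (Calabi--Yau, K3, general type complete intersections) are \emph{not} strongly $F$-regular; handling the non-$F$-regular case is the whole point here. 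Indeed the paper ultimately proves these rings are \emph{not} $D$-simple, so an unconditional FFRT $\Rightarrow$ $D$-simple implication, combined with FFRT, would produce a contradiction too cheaply---because it is false at this level of generality, or at least unknown. You would need to replace this step.

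The correct replacement is the paper's Proposition on the grading: suppose $D_R$ has no negative-degree operators. Using the explicit graded $D_R$-module structure on $R[1/x]$, namely $\delta(r/x^n)=\delta(rx^{p^e-n})/x^{p^e}$ for $\delta\in\Hom_{R^{p^e}}(R,R)$, one checks that each $R\<1/x^n\>$ is then a $D_R$-submodule, so $R[1/x]$ carries a strictly increasing chain of $D_R$-submodules and cannot be generated by $1/x$, contradicting Takagi--Takahashi. Your subsequent ``symbol'' heuristic for passing from a negative-degree operator to a global section of $(\Sym^j\Omega_X)^\vee\otimes\L^{-1}$ is also not how the paper proceeds: in characteristic $p$ this passage goes through Jeffries' local-cohomology description of $D_{R/k}$ for Gorenstein $R$, which is exactly why the arithmetic-Gorenstein hypotheses ($\L^{\otimes r}\cong\w_X$ together with the vanishing of $H^i(X,\L^m)$) appear in the statement. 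Once these two points are fixed, the rest of your proposal is structurally the paper's: reduce to the vanishing of $H^0((\Sym^m\Omega_X)^\vee\otimes\L^{-1})$, observe that $(\Sym^m\Omega_X)^\vee$ is the $\supth{m}$ divided power of $T_X$, and derive vanishing from strong $\mu$-semistability of $\Omega_X$ together with $\mu(\Omega_X)\ge 0$. The citations differ slightly---the paper appeals directly to Langer's Theorem~0.1 (with its characteristic bound $p\ge(n-1)(n-2)$) for Calabi--Yau's, Langer's Proposition~4.1 for K3's, and Noma for complete intersections---rather than re-deriving strong semistability from Bogomolov inequalities or restriction theorems, but that part of your outline is sound in spirit.
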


We also give a new proof of the fact that the homogeneous coordinate ring of a smooth curve of genus $g$ has FFRT if and only if $g=0$.

As just one concrete example of this theorem, we have the following. 

\begin{exa}
As we discuss in Section~\ref{K3}, if $p\equiv 1\mod 4$, the above says that 
$$
\frac{k[x,y,z,w]}{x^4+y^4+z^4+w^4}
$$
does not have \graded FFRT. (The $p\equiv 3 \mod 4$ case is still unknown to us.)

Similarly, for \emph{any} characteristic $p>0$  and any $d\geq 5$,
we show in Section~\ref{generaltype}
that the ring
$$
\frac{k[x,y,z,w,t]}{x^d+y^d+z^d+w^d+t^d}
$$
does not have \graded FFRT.
\end{exa}

We finish the introduction with a brief sketch of the proof, as well as an outline of the paper. 
In Section~\ref{TTdif}, we use results of \cite{TT} to connect the \graded FFRT property of $R$ to the behavior of certain modules under  the ring of differential operators on $R$. 
To study the ring of differential operators on a homogeneous coordinate ring $R(X,\L)$ of a variety $X$, 
in Section~\ref{diffcone}, we prove the following, which is the main technical result of this paper. 

\begin{thm}\label{thm17}
Let $k$ be a perfect field, and
let $R$ be a Gorenstein $\mathbb N$-graded $k$-algebra domain of finite type over $R_0=k$.
Let $X=\Proj R$, with very ample line bundle $\L=\O_X(1)$.
If\, $R$ has differential operators of negative degree, then $H^0((\Sym^m  \Omega_X)^\vee \otimes\L\inv)\neq0$ for $m\gg0$.
\end{thm}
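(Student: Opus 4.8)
\emph{Reductions.} If $0\neq\delta\in D_R$ has degree $-d$ with $d\geq 1$, then for any nonzero $r\in R_{d-1}$ the operator $\mu_r\circ\delta$ (multiplication by $r$ after $\delta$) is a nonzero differential operator of degree $-1$ --- nonzero because $R$ is a domain --- so I may assume $d=1$. For simplicity assume also that $R$ is standard graded, so that $\O_X(m)=\L^{\otimes m}$; the general case reduces to this (with a little care about the Gorenstein hypothesis) by passing to a Veronese subring. The case $\dim X=0$ is trivial, so assume $\dim X\geq 1$; then $R$ is Cohen--Macaulay of dimension $\geq 2$, hence $\operatorname{depth}_{\m}R\geq 2$, and I may freely identify each graded $R$-module occurring below with the module $\bigoplus_m H^0$ of sections of its sheafification on $X$.

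\emph{From the symbol to a section.} Let $n$ be the order of $\delta$. Its principal symbol, an element of $(D_R^{\leq n}/D_R^{\leq n-1})_{-1}$, corresponds via $D_R^{\leq n}/D_R^{\leq n-1}\hookrightarrow\Hom_R(\Sym_R^n\Omega_R,R)$ (an isomorphism when $X$ is smooth) to a nonzero graded $R$-linear map $\sigma\colon\Sym_R^n\Omega_R\to R$ of degree $-1$. The geometric input is an identification of $\widetilde{\Omega_R}$ on $X$: away from the vertex one has $0\to\pi^*\Omega_X\to\Omega_U\to\Omega_{U/X}\to 0$, where $\pi\colon U=\operatorname{Spec}R\setminus\{\m\}\to X$ is the associated $\mathbb{G}_m$-torsor and $\Omega_{U/X}\cong\O_U$ is generated by the logarithmic differential of the torsor coordinate; since $\pi$ is affine, taking $\mathbb{G}_m$-invariants yields the exact sequence $0\to\Omega_X\to\widetilde{\Omega_R}\to\O_X\to 0$ whose class in $H^1(X,\Omega_X)$ is the Atiyah class of $\L$. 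Sheafifying $\sigma$ (using $\operatorname{depth}\geq 2$, and $\widetilde{\Sym_R^n\Omega_R}=\Sym^n\widetilde{\Omega_R}$) therefore produces a nonzero section of $(\Sym^n\widetilde{\Omega_R})^\vee\otimes\L^{-1}$.

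\emph{One twist, then all of them.} From $0\to\Omega_X\to\widetilde{\Omega_R}\to\O_X\to 0$, the sheaf $\Sym^n\widetilde{\Omega_R}$ carries a filtration with graded pieces $\Sym^i\Omega_X$ ($0\leq i\leq n$); dualizing (everything is locally free when $X$ is smooth, the case of all the applications) gives $(\Sym^n\widetilde{\Omega_R})^\vee$ a filtration with graded pieces $(\Sym^i\Omega_X)^\vee$, with $\O_X$ at the bottom and $(\Sym^n\Omega_X)^\vee$ on top. Chasing our nonzero section through this filtration, it has nonzero image in some $H^0((\Sym^i\Omega_X)^\vee\otimes\L^{-1})$, and $i\neq 0$ since the bottom piece contributes only $H^0(\L^{-1})=0$ ($\L$ ample); so $H^0((\Sym^{i_1}\Omega_X)^\vee\otimes\L^{-1})\neq 0$ for some $1\leq i_1\leq n$. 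To reach all large $m$: a commutator $[\delta,\mu_x]$ with $x\in R_1$ raises the degree of an operator by one while lowering by one the ``level'' of its symbol (the graded piece of the dual filtration in which the top part of the symbol lies), so iterated commutators of $\delta$ give operators whose symbols lie at levels $i_1-1,\,i_1-2,\dots$; composing operators adds degrees and, at the level of symbols, multiplies in $\operatorname{gr}D_R$, hence adds levels. One can thus assemble degree-$(-1)$ operators whose symbols realize every level in the numerical semigroup generated by $i_1$ and $i_1-1$; as these are coprime, it contains all sufficiently large integers, and $H^0((\Sym^m\Omega_X)^\vee\otimes\L^{-1})\neq 0$ for $m\gg 0$.

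\emph{The main obstacle.} The last step is where I expect the real work to be. In characteristic $p$ the ring $\operatorname{gr}D_R$ is a divided-power algebra, not a polynomial algebra, so its elements --- and $\sigma(\delta)$, $\sigma([\delta,\mu_x])$ in particular --- are typically zero-divisors; a composite of operators can therefore have symbol at a strictly lower level than the sum, i.e.\ the products of symbols that the numerical-semigroup argument calls for may vanish. Pushing the argument through --- keeping track of which products of symbols are nonzero, exploiting the freedom in the choice of the elements $x\in R_1$ (and of $\delta$ itself), and presumably using the Gorenstein hypothesis in an essential way rather than only for $\operatorname{depth}\geq 2$ --- is the technical heart of the proof.
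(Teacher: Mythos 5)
Your route is genuinely different from the paper's, and it's worth separating what works from what doesn't.

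\emph{What works.} Your first two paragraphs are sound: starting from a nonzero $\delta\in D_R$ of negative degree, you pass to its order-$n$ symbol, a nonzero degree-$(-1)$ element of $\Hom_R(\Sym^n_R\Omega_R,R)$ (the injectivity of $D_R^{\leq n}/D_R^{\leq n-1}\hookrightarrow\Hom_R(\Sym^n\Omega_R,R)$ is correct and does not need smoothness of $\operatorname{Spec}R$, only that $\Sym^n\Omega_R\twoheadrightarrow\Delta^n/\Delta^{n+1}$). Sheafifying gives a nonzero section of $(\Sym^n\widetilde\Omega_X)^\vee\otimes\L^{-1}$, and then the dual filtration coming from $0\to\Omega_X\to\widetilde\Omega_X\to\O_X\to0$ (the same filtration as the paper's Lemma~\ref{extens}, just run in the ``forward'' direction) produces a nonzero element of $H^0((\Sym^{i_1}\Omega_X)^\vee\otimes\L^{-1})$ for some $1\leq i_1\leq n$, the bottom piece $H^0(\L^{-1})$ dying by ampleness. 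This establishes the nonvanishing for \emph{some} $m$ --- which is in fact all that the paper's own proof establishes (it argues the contrapositive: vanishing for \emph{all} $m$ forces the nonexistence of negative-degree operators), and all that the downstream applications (Theorem~\ref{main}) use. The ``$m\gg0$'' in the statement is not sharpened beyond ``some $m$'' by the published argument either.

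\emph{Where the gap is.} The entire last paragraph, aiming at literally all large $m$ via iterated commutators and a numerical-semigroup argument, is where you stall, and rightly so. Even setting aside characteristic issues, the composition/commutator bookkeeping does not deliver degree-$(-1)$ operators at every large order without further input: composing two degree-$(-1)$ operators gives degree $-2$, so you must re-multiply by ring elements, and the set of attainable orders at degree $-1$ is a translate of the semigroup rather than the semigroup itself. More fundamentally, as you note, in characteristic $p$ the associated graded $\operatorname{gr}D_R$ is a divided-power algebra, so products of symbols routinely vanish ($\Gamma^a\otimes\Gamma^b\to\Gamma^{a+b}$ is not surjective when $p\leq a+b$), and the numerical-semigroup strategy can collapse. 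You are honest that this is the obstacle, but it is not resolved, so the ``for all $m\gg0$'' version of the conclusion is not proved by your argument.

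\emph{What the paper does instead.} The paper does not use the symbol map or the filtration by order at all. It invokes Jeffries' identification $D_{R/k}=H^{d}_\Delta(R\otimes_kR)(a)$ for Gorenstein $R$, applies graded local/Matlis duality, and runs an induction on the $\Delta$-adic filtration of $R\otimes_kR$; the surjections $\Sym^\ell\Omega_R\twoheadrightarrow\Delta^\ell/\Delta^{\ell+1}$ then reduce everything to the vanishing of $H^0((\Sym^\ell\widetilde\Omega_X)^\vee\otimes\L^{-1})$ for each $\ell$ separately, with no need to compose or multiply symbols. This is precisely what sidesteps your divided-power obstruction, and it is also where the Gorenstein hypothesis is used essentially (for Jeffries' formula and for graded local duality), not merely for $\operatorname{depth}\geq2$ as in your sketch. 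Your instinct that Gorenstein must enter more seriously was correct; the paper's mechanism is this duality, not the symbol calculus.

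Minor points: the reduction to the standard-graded case via a Veronese is not justified (negative-degree operators on $R$ need not restrict to $R^{(d)}$, and the Gorenstein property can change); and you implicitly assume $X$ smooth so that the filtration pieces are locally free, an assumption the theorem statement does not make, though it holds in all the paper's applications.
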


This result, which may be of independent interest, provides a new tool for studying differential operators in positive characteristic; for example, this allows one to show that homogeneous coordinate rings of curves of genus $g\geq 1$ are not $D$-simple.

Combining this with \cite{TT}, we obtain the following. 

\begin{thm}
Retain the assumptions of Theorem~\ref{thm17}.
If\, $H^0((\Sym^m \Omega_X)^\vee\otimes \L\inv)=0$ for all $m$, then $R$ does not have \graded FFRT.
\end{thm}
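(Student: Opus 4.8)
The plan is to deduce this as a short contrapositive of Theorem~\ref{thm17}, feeding in the consequences of \graded FFRT for the ring of differential operators $D(R)$ that come from \cite{TT} (as recalled in Section~\ref{TTdif}). Write $d=\dim R$, and suppose for contradiction that $R$ has \graded FFRT. Applying the results of \cite{TT} with $I=\m$, in the graded form set up in Section~\ref{TTdif}, the graded local cohomology module $H^d_\m(R)$ is then a finitely generated $D(R)$-module. I claim this forces $D(R)$ to contain a differential operator of negative degree — one that strictly lowers degrees — which by the contrapositive of Theorem~\ref{thm17} is incompatible with the hypothesis that $H^0((\Sym^m\Omega_X)^\vee\otimes\L\inv)=0$ for all $m$.

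To see the claim, note first that since $R$ is Gorenstein $\mathbb N$-graded with $R_0=k$, graded local duality identifies $H^d_\m(R)$ with the graded $k$-dual of $\w_R\cong R(a)$, where $a$ is the $a$-invariant of $R$; concretely $H^d_\m(R)_n\cong\Hom_k(R_{a-n},k)$. Since $R$ is a domain with $R_0=k$ and $R\neq k$, it has nonzero graded pieces in arbitrarily large degree, so $H^d_\m(R)$ is nonzero in arbitrarily negative degrees (and vanishes in degrees $>a$). On the other hand, a differential operator of degree $i$ carries the degree-$\ell$ piece of any graded $R$-module into its degree-$(\ell+i)$ piece; hence, if $D(R)$ contained no operator of negative degree, then $D(R)\cdot m\subseteq\bigoplus_{\ell\ge\deg m}H^d_\m(R)_\ell$ for every homogeneous $m$, and so the $D(R)$-submodule generated by any finite set of homogeneous elements of $H^d_\m(R)$ would be concentrated in degrees $\ge N$ for some $N\in\Z$. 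This contradicts the fact that $H^d_\m(R)$ is nonzero in arbitrarily negative degrees, so $D(R)$ must have an operator of negative degree.

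Finally, by hypothesis $H^0((\Sym^m\Omega_X)^\vee\otimes\L\inv)=0$ for all $m$, so in particular this holds for $m\gg0$; the contrapositive of Theorem~\ref{thm17} then gives that $R$ has no differential operator of negative degree, contradicting the previous paragraph. Hence $R$ does not have \graded FFRT.

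Modulo the input from \cite{TT}, this is a bookkeeping argument, so the step I expect to be the main obstacle is exactly that input: one needs the $D$-module finiteness consequence of FFRT in the \emph{graded} category, applied to a Gorenstein $\mathbb N$-graded $k$-algebra domain with $R_0=k$ — precisely the hypotheses carried over from Theorem~\ref{thm17}. I would also pin down once and for all the degree convention for differential operators, so that ``negative degree'' as used in Theorem~\ref{thm17} really is ``degree-lowering'' as used above; with that settled, the implication chain ``$H^0=0$ for all $m$'' $\Rightarrow$ ``$D(R)$ has no negative-degree operator'' $\Rightarrow$ ``$R$ has no \graded FFRT'' holds as stated.
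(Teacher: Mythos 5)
Your proof is correct, and it proves the same theorem by the same overarching logic: feed the consequences of graded FFRT from \cite{TT} into a degree argument to produce a negative-degree differential operator, then contradict the contrapositive of Theorem~\ref{thm17}. The one substantive difference is the choice of witness $D_R$-module. The paper works with the localization $R[1/x]$: it recalls in Section~\ref{TTdif} that FFRT implies $R[1/x]$ is generated by $1/x$ over $D_R$ (Theorem~\ref{TTThm}, directly from \cite[Corollary~2.10]{TT}), and then proves (Proposition~\ref{fgneg}) via the filtration $R\subset R\langle1/x\rangle\subset R\langle1/x^2\rangle\subset\cdots$ and an explicit grading computation that this forces a negative-degree operator. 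You instead take $H^d_\m(R)$ as the witness. This is a valid choice — the finite $D_R$-generation of $H^d_\m(R)$ under FFRT is also a consequence of \cite{TT} (it follows, say, from the $R[1/x]$ statement and the \v{C}ech complex, since $H^d_\m(R)$ is a $D_R$-linear quotient of a finite sum of localizations) — and your degree argument via local duality, $H^d_\m(R)_n\cong\Hom_k(R_{a-n},k)$, is clean and correct. Two small points worth flagging: (i) your route requires the Gorenstein hypothesis to get the duality description of the grading of $H^d_\m(R)$, whereas the paper's Corollary~\ref{fgnegcor} (FFRT $\Rightarrow$ negative-degree operator exists) works for arbitrary $\mathbb N$-graded domains of finite type; here this is harmless since Gorenstein is already among the retained hypotheses of Theorem~\ref{thm17}, but it does make your version of the intermediate step slightly less general; (ii) you should verify, as the paper does explicitly for $R[1/x]$ via the formula $\delta(r/x^n)=\delta(rx^{p^e-n})/x^{p^e}$, that the $D_R$-action on your graded module really is compatible with the grading — for $H^d_\m(R)$ this is inherited from the graded \v{C}ech complex but deserves a sentence. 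With those two remarks in place, your argument is a correct and essentially parallel alternative to the paper's Proposition~\ref{fgneg} and Corollary~\ref{fgnegcor}.
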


Using this theorem, we then reduce the study of \graded FFRT for section rings $R(X,\L)$ to the ``nonpositivity'' properties of $(\Sym^m \Omega_X)^\vee$. For Calabi--Yau varieties, we use the results of \cite{Langer} to understand their cotangent bundles in Section~\ref{K3}. In Section~\ref{generaltype}, we use \cite{Noma1,Noma2} to understand what happens for complete intersections of general type.
We finish by describing some open questions in Section~\ref{questions}.

\subsection{Acknowledgments}

We would like to thank Eamon Quinlan-Gallego for introducing us to the
results of \cite{TT}, which form the basis of our methods in this
paper, and for helpful suggestions on a draft of this paper.  We would
also like to thank Swaraj Pande and Shravan Patankar for useful
discussions.  Finally, we would like to thank Karl Schwede, Anurag
Singh, Karen Smith, and Jack Jeffries for useful comments on an
earlier draft of the paper.

\section{Preliminaries}
Throughout, we will work over a perfect field $k$ of characteristic $p>0$.
We will try to specify throughout, but the graded rings we will consider will be finitely generated $\mathbb N$-graded $k$-algebras $R$ with $R_0=k$; we do not require $R$ to be generated in degree 1.

\subsection{Finite $\boldsymbol{F}$-representation type}

\begin{dfn}
  Let $R$ be a ring of characteristic $p>0$, and assume that $R$ is $F$-finite (\textit{i.e.}, the Frobenius morphism is finite). For $e>0$, we denote by $F^e$ the $\supth{e}$ iterated Frobenius and write $F^e_* R$ for the $R$-module obtained by restriction of scalars along $F^e\colon R\to R$.
We say that $R$ has finite $F$-representation type (or FFRT) if there are finitely many finitely generated $R$-modules $M_1,\dots,M_N$ such that for any $e$, we can write
$$
F_*^e R \cong M_1^{\oplus a_{e,1}}\oplus \dots\oplus
M_N^{\oplus a_{e,N}}
$$
(as $R$-modules)
for some nonnegative $a_{e,i}$. 
That is, $R$ has FFRT if there are only finitely many modules occurring in irreducible $R$-module decompositions of each $F_*^e R$.

We say that a graded ring $R$ has \emph{graded} FFRT if there are
finitely many finitely generated $\Q$-graded $R$-modules $M_1,\dots,M_N$ such that for any $e$, we can write
$$
F_*^e R \cong 
\bigoplus_{i=1}^N
\bigoplus_{j=1,\dots,n_{e,i}}
M_i^{a_{e,i,j}}\left(\b_{e,i,j}\right)
$$
for some  $\b_{e,i,j}\in \Q$ and nonnegative $a_{e,i,j}$, where $M(\b)$ denotes the shift of the $\Q$-graded module $M$ by $\b$.
That is, only finitely many graded modules occur, \emph{up to shifting the grading}, in 
an irreducible decomposition of the $F_*^e R$.
\end{dfn}

Some authors use simply ``FFRT'' for a graded ring to refer to graded FFRT. The following lemma, likely known to experts, ensures that no confusion results, and we will henceforth simply say ``FFRT.''
\looseness-1

\begin{lem}
Let $k$ be a perfect field.
If an $\mathbb N$-graded domain $R$ finitely generated over $R_0=k$ has FFRT, it has graded FFRT.
\end{lem}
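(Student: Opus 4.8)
The plan is to show that graded FFRT is no weaker than ordinary FFRT for an $\mathbb N$-graded domain $R$ over a perfect field, the point being that an ordinary (ungraded) direct-sum decomposition of $F_*^e R$ can always be upgraded to a graded one. The key structural fact I would invoke is the Krull--Schmidt property: since $R$ is $\mathbb N$-graded with $R_0 = k$ a field, the category of finitely generated $\Q$-graded $R$-modules has Krull--Schmidt (the endomorphism ring of a finitely generated graded module, in degree $0$, is a finite-dimensional $k$-algebra, hence semiperfect, so idempotents split and indecomposable decompositions are unique up to permutation and graded shift). Moreover $F_*^e R$ carries a natural $\tfrac1{p^e}\Z$-grading (giving $F_*^e x$ the degree $\tfrac1{p^e}\deg x$), so it \emph{is} a finitely generated $\Q$-graded $R$-module.

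First I would assume $R$ has ordinary FFRT, witnessed by finitely generated $R$-modules $M_1,\dots,M_N$, and decompose each $M_i$ into ungraded indecomposables; replacing the list, we may assume each $M_i$ is indecomposable as an ungraded module. Next I would decompose $F_*^e R$ into graded indecomposables $N_{e,1},\dots,N_{e,r_e}$ (up to shift); forgetting the grading, each $N_{e,j}$ is an ungraded module, and by the ordinary FFRT hypothesis together with ungraded Krull--Schmidt, the ungraded module underlying $N_{e,j}$ must be one of the ungraded indecomposable summands of some $M_i$ — in particular it lies in a finite set $\mathcal{U}$ of ungraded iso-classes independent of $e$. The remaining task is to see that each ungraded class in $\mathcal{U}$ supports only finitely many graded structures up to shift: if $N$ and $N'$ are finitely generated $\Q$-graded $R$-modules that are isomorphic after forgetting the grading, I claim $N' \cong N(\beta)$ for some $\beta \in \Q$. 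This is where I would use that $R$ is a \emph{domain} and $\mathbb N$-graded: an ungraded isomorphism $N \to N'$ can be decomposed into graded pieces using the natural torus-like $\Z$-grading, and a minimal-degree nonzero graded component of this isomorphism, combined with the fact that $R$ has no zerodivisors (so homogeneous-component maps that are "generically" isomorphisms are forced to be honest graded isomorphisms after a shift), yields the desired graded iso up to shift. Concretely: pick a homogeneous generating set of $N$; the images under the ungraded isomorphism have bounded degrees, extract the top-degree graded component of the isomorphism on each generator, and check it assembles to a graded isomorphism $N(\beta) \to N'$ by a Nakayama-type argument using that the cokernel is a finitely generated graded module that vanishes after inverting the irrelevant ideal.

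Putting these together: the finitely many ungraded classes in $\mathcal{U}$, together with the finitely-many-graded-structures-per-class statement, produce a finite list of $\Q$-graded modules $M_1',\dots,M_{N'}'$ such that every graded indecomposable summand $N_{e,j}$ of every $F_*^e R$ is a shift of one of them — which is exactly graded FFRT.

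The main obstacle I anticipate is the "finitely many graded lifts of a fixed ungraded module, up to shift" step — i.e., rigidity of the grading. One must rule out the possibility that a single ungraded indecomposable admits a continuum (or at least an infinite discrete family modulo $\Q$-shift) of inequivalent gradings; this is exactly where perfectness of $k$ (so that $k$ has no nontrivial gradings/automorphisms interfering) and the domain hypothesis on $R$ are essential, and a careful Krull--Schmidt argument in the graded category, rather than the ungraded one, is the cleanest route. Everything else — that $F_*^e R$ is naturally $\Q$-graded, and that Krull--Schmidt holds in both settings — is standard.
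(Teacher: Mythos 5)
Your overall strategy mirrors the paper's: upgrade an isomorphism that ignores the grading to a graded isomorphism up to shift, thereby bounding the graded isomorphism classes of indecomposable summands of $F_*^eR$. The concrete route differs in one respect. The paper first localizes at the homogeneous maximal ideal $\m$, deduces FFRT for $R_\m$, and then shows (via its two auxiliary lemmas) that an $R_\m$-isomorphism $M_\m\to N_\m$ lifts to a possibly inhomogeneous $R$-linear map $M\to N$ whose graded pieces can be analyzed. You instead compare ungraded $R$-modules directly, invoking ungraded Krull--Schmidt for $R$ itself. The localization step lets the paper exploit the FFRT hypothesis in a more controlled setting; your direct route is shorter but leans on ungraded Krull--Schmidt for the graded ring $R$, which is not automatic and which the paper avoids having to address.

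The genuine gap is in your central claim: you assert that if $N, N'$ are finitely generated $\Q$-graded $R$-modules isomorphic after forgetting the grading, then $N'\cong N(\beta)$ for some $\beta$. This is false as stated: over $R=k[x]$, the modules $R\oplus R(1)$ and $R\oplus R(2)$ are both free of rank $2$ and hence ungraded-isomorphic, but no shift of one is graded-isomorphic to the other. The claim only holds once restricted to graded-\emph{indecomposable} modules, which is in fact what you apply it to, so this is a scoping error rather than a fatal one, but it must be stated correctly. Moreover the proof you sketch for it (``extract the top-degree graded component of the isomorphism on each generator \dots Nakayama-type argument'') does not directly give the restricted statement either; the argument that actually works is the graded Krull--Schmidt one you gesture at up front. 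Namely, for $M$ graded-indecomposable and finitely generated, $\Hom_R(M,M)_0$ is a finite-dimensional local $k$-algebra; expanding an ungraded isomorphism $\phi\colon M\to N$ and its inverse $\psi$ into graded components and writing $1_M=\sum_i\psi_{-i}\phi_i$ forces some $\psi_{-i}\phi_i$ to be a unit (a sum in a local ring equal to a unit has a unit summand), so that $\phi_i\colon M\to N$ is a split graded injection, hence a graded isomorphism by indecomposability of $N$. (The paper's second sub-lemma, asserting specifically that the \emph{lowest}-degree piece $\phi_d$ is an isomorphism, has the same scope restriction to indecomposables; it is the ``\emph{some} graded piece is an isomorphism'' statement that is robust.) With the claim scoped to indecomposables and proved as above, your argument goes through.
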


\begin{proof}
Let $\m$ be the homogeneous maximal ideal.
If $R$ has FFRT, so does the localization $R_\m$.
Let 
$$
F_*^e R = M_{e,1}^{\oplus a_{e,1}}\oplus \dots \oplus M_{e,n_e}^{a_{e,n_e}}
$$
be the decomposition into irreducibles. 
Since $R_\m$ has FFRT, only finitely many indecomposable modules $N_1,\dots,N_r$ occur up to isomorphism in the decompositions of $F_*^e R_\m$.
Thus, we must have that each $M_{ij}$ is isomorphic after localizing at $\m$ to one of the finitely many $N_r$. If there were infinitely many isomorphism classes (up to shifts) of the $M_{ij}$, then non-isomorphic $M_{ij}$, $M_{i'j'}$ would become isomorphic after localizing.
Noting that the $M_{ij}$ are finitely generated and $R$-torsion-free, the following two lemmas then conclude the proof.

\begin{lem}
Let $R$ be an $\mathbb N$-graded ring with homogeneous maximal ideal $\m$ $($\textit{i.e.}, $R_0$ is a field\,$)$.
Let $M,N$ be graded $R$-modules, with $M$ and $ N$ finitely generated and $N$ torsion-free. If there is a morphism $\phi\colon M_\m\to N_\m$ of\, $R_\m$-modules, there is a morphism $\wtilde \phi\colon M\to N$ of\, $R$-modules with $\wtilde \phi \otimes _R R_\m = \phi$, up to a unit in $R_\m$.
\end{lem}

\begin{proof}
Say $M$ is generated by $m_i$ for $i=1,\dots,e$, and $N$ by $n_i$, and that $\phi(m_i)=\sum (a_{ij}/b_{ij}) n_j$ with  $a_{ij}\in R$, $b_{ij} \in R- \m$. Multiplication by the unit $\prod b_{ij}$ of $R_\m$ is an isomorphism, so we may as well assume that we can write $\phi(m_i)=\sum a_{ij} n_j$ with $a_{ij}\in R$.

Let
$$
R^f\xra{\psi} R^e \lra M \lra 0
$$
be a homogeneous presentation of the graded $R$-module $M$, with
$e_i$ the basis element of $R^e$ mapping onto $m_i$. 
The assignment $e_i\mapsto \sum a_{ij} n_j$ defines a map $ \Phi\colon R^e\to N$. 
Since $R^e$ is a finite $R$-module and $N$ is finitely generated and homogeneous, $\Hom(R^e,N)$ is the sum of its graded components, so we may write $ \Phi =  \Phi_d +\dots + \Phi_{d+\ell}$ for some $d$, with each $ \Phi_i$ homogeneous of degree $i$.

We claim that each $ \Phi_i\colon R^e\to N$ descends to a map $M\to N$.
This occurs if and only if each $ \Phi_i \circ \psi$ is the zero map. 
Since $\phi\colon M_\m\to N_\m$ is defined, we know that $\Phi \circ \psi $ is the zero morphism $R_\m^f\to N_\m$. Thus, for any $r$ homogeneous in $R^f$, we have
$$
0=\Phi\circ \psi(r) =\Phi_d(\psi(r))+\dots+\Phi_{d+\ell}(\psi(r))
$$
(using that the left side is zero in $N_\m$, and thus in $N$, by the torsion-freeness of $N$).
Each term on the right side occurs in a different degree, and thus each graded piece on the right side is 0. Thus, all $\Psi_i\circ \psi$ are~0, and all $\Phi_i$ descend to morphisms $\phi_i\colon M\to N$. Finally, taking $\wtilde \phi\colon\phi_d+\dots+\phi_{d+\ell}$, we obtain the desired map.
\end{proof}

\begin{lem}
Let $R$ be an $\mathbb N$-graded ring with homogeneous maximal ideal $\m$.
Let $\phi\colon M\to N$ be an $R$-linear morphism $($not necessarily graded\,$)$ of finitely generated torsion-free graded $R$-modules, which is an isomorphism $M_\m\to N_\m$. If $\phi=\phi_d+\dots+\phi_{d+\ell}$ is the decomposition of $\phi$ into graded pieces, then $\phi_d\colon M\to N$ is an isomorphism.
\end{lem}

\begin{proof}
The surjectivity is clear: since it is graded, $\phi_d\colon M\to N$ is surjective if and only if  $M/\m M\to N/\m N$ is surjective, but $\phi$ and $\phi_d$ induce the same map $M/\m M\to N/\m N$ (since the $\phi_{d+i}$ all have image inside $\m N$).
For the injectivity, we can apply the same argument as we did for the map $\phi\colon M\to N$ to the inverse $\psi:=\phi\inv\colon N\to M$. We then have that the first graded piece $\psi_{d'}$ gives a graded surjection $N\to M$. We then have that $\psi_{d'}\circ \phi_d\colon M\to M$ is surjective; Nakaya's lemma then implies that $\psi_{d'}\circ \phi_d$ is also injective, and thus $\phi_d$ is injective.
\end{proof}

With the lemma established, we see that any $M_{i,j}$ becoming isomorphic after localizing at $\m$ already were, and so only finitely many isomorphism classes of the $M_{i,j}$ occur.
\end{proof}

\subsection{Differential operators}
We briefly recall the notion of differential operators. 

\begin{dfn}
Let $k$ be a field, and 
let $R$ be a $k$-algebra. We define $D^m_{R/k}\subset \End_k(R)$, the $k$-linear differential operators of order $m$, inductively as follows:
\begin{itemize}
\item $D^0_{R/k} =\Hom_R(R,R)\cong R$, thought of as multiplication by $R$. 
\item  $\delta \in \End_k(R)$ is in $D^m_{R/k}$ if $[\delta, r] \in D^{m-1}_{R/k}$ for any $r\in D^0_{R/k}$.
\end{itemize}
We write $D_{R/k}=\bigcup D^m_{R/k}$. Then $D_{R/k}$ is a noncommutative ring, and $R$ is a left $D_{R/k}$-module.
\end{dfn}

In characteristic $p$, the behavior of differential operators can be quite different from in characteristic $0$. For example, if $R=\mathbb F_p[t]$, $D_{R/k}$ is not noetherian or finitely generated over $R$.
In spite of this, in some ways the ring of differential operators is \emph{better} understood in positive characteristic, through its links to the Frobenius morphism. In particular, there is the following description. 

\begin{prop}[\textit{cf.}~\protect{\cite[Theorem~1.4.9]{Yekutieli}}]
\label{Yek}
Let $k$ be a perfect field of characteristic $p>0$, and let $R$ be an $F$-finite $k$-algebra. Then 
$$
D_{R/k} = \bigcup_{e} \Hom_{R^{p^e}}(R,R).
$$
\end{prop}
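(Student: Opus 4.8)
The plan is to identify $R$ with its image in $\End_k(R)$ under the multiplication map $\mu\colon R\to\End_k(R)$, so that both $D_{R/k}$ and $\bigcup_e\Hom_{R^{p^e}}(R,R)$ sit inside $\End_k(R)$, and then prove the two inclusions separately. For $D_{R/k}\subseteq\bigcup_e\Hom_{R^{p^e}}(R,R)$ I would prove the sharper statement $D^m_{R/k}\subseteq\Hom_{R^{p^e}}(R,R)$ whenever $p^e>m$. The crucial input is the characteristic-$p$ identity $\mathrm{ad}_{T^{p^e}}=(\mathrm{ad}_T)^{p^e}$ for $T\in\End_k(R)$, where $\mathrm{ad}_T(S)=TS-ST$: since $\mathrm{ad}_T=L_T-R_T$ for the commuting operators of left and right multiplication by $T$ on $\End_k(R)$, the binomial theorem gives $(\mathrm{ad}_T)^p=L_T^p-R_T^p=L_{T^p}-R_{T^p}=\mathrm{ad}_{T^p}$ (the intermediate binomial coefficients being divisible by $p$), and one iterates. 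Next I would unwind the inductive definition to see that $\delta\in D^m_{R/k}$ forces the $(m+1)$-fold commutator $(\mathrm{ad}_{\mu(a)})^{m+1}\delta$ to vanish for all $a\in R$; combined with the identity (and with $\mu(a^{p^e})=\mu(a)^{p^e}$, as $\mu$ is a ring map) this gives $\mathrm{ad}_{\mu(a^{p^e})}\delta=(\mathrm{ad}_{\mu(a)})^{p^e}\delta=0$ as soon as $p^e>m$, i.e.\ $\delta$ commutes with multiplication by $a^{p^e}$. Since Frobenius is a ring homomorphism, the elements $a^{p^e}$ ($a\in R$) are exactly the elements of the subring $R^{p^e}$, and $k\subseteq R^{p^e}$ because $k$ is perfect; hence $\delta$ is $R^{p^e}$-linear.

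For the reverse inclusion $\bigcup_e\Hom_{R^{p^e}}(R,R)\subseteq D_{R/k}$, I would use the module of principal parts, via the standard identification $D^m_{R/k}=\Hom_R(P^m_{R/k},R)$, where $P^m_{R/k}=(R\otimes_kR)/J^{m+1}$, $J=\ker(R\otimes_kR\xrightarrow{\mathrm{mult}}R)$, the $\Hom$ is taken for the left $R$-module structure, and $\psi$ corresponds to the operator $r\mapsto\psi(\overline{1\otimes r})$. Given $\phi\in\Hom_{R^{p^e}}(R,R)$, the map $R\otimes_kR\to R$, $a\otimes b\mapsto a\phi(b)$, is left $R$-linear and, by the $R^{p^e}$-linearity of $\phi$, factors through $R\otimes_{R^{p^e}}R$; it therefore descends to $P^m_{R/k}$, exhibiting $\phi$ as an element of $D^m_{R/k}$, as soon as $J^{m+1}\subseteq K:=\ker(R\otimes_kR\to R\otimes_{R^{p^e}}R)$. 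To obtain this, observe that in $R\otimes_{R^{p^e}}R$ one has $(b\otimes 1-1\otimes b)^{p^e}=b^{p^e}\otimes 1-1\otimes b^{p^e}=0$ for all $b\in R$, so the augmentation ideal $\bar J=J/K$ of $R\otimes_{R^{p^e}}R$ over $R$ is generated by nilpotent elements of exponent at most $p^e$; and since $R$ is $F$-finite, $R\otimes_{R^{p^e}}R$ is module-finite over $R$, hence Noetherian, so $\bar J$ is finitely generated — and a finitely generated ideal generated by nilpotents is nilpotent, which is precisely $J^{m+1}\subseteq K$ for $m\gg0$.

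I expect the first inclusion to be formal once the Jacobson-type identity $\mathrm{ad}_{T^p}=(\mathrm{ad}_T)^p$ is in hand. The step most likely to need care is the nilpotence of the augmentation ideal $\bar J$ in the second inclusion: this is exactly where $F$-finiteness genuinely enters (to make $\bar J$ finitely generated), just as perfectness of $k$ is what guarantees $k\subseteq R^{p^e}$ and hence that $\Hom_{R^{p^e}}(R,R)$ consists of $k$-linear maps in the first place. (The statement is of course also \cite[Theorem~1.4.9]{Yekutieli}; the above is meant to be a self-contained alternative.)
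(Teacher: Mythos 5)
The paper does not prove this proposition; it is stated as a citation to \cite[Theorem~1.4.9]{Yekutieli} and used as a black box. So there is no ``paper's own proof'' to compare against, and the question is simply whether your argument is correct.

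Your proof is correct, and both halves are essentially the standard argument (also found in Yekutieli and, earlier, Chase and Smith--Van den Bergh). The forward inclusion $D^m_{R/k}\subseteq\Hom_{R^{p^e}}(R,R)$ for $p^e>m$ via Jacobson's identity $\mathrm{ad}_{T^{p^e}}=(\mathrm{ad}_T)^{p^e}$ is formal and correctly executed: the inductive definition gives $(\mathrm{ad}_{\mu(a)})^{m+1}\delta=0$, hence $\mathrm{ad}_{\mu(a^{p^e})}\delta=(\mathrm{ad}_{\mu(a)})^{p^e}\delta=0$ once $p^e\geq m+1$, and $R^{p^e}=\set{a^{p^e}:a\in R}$ because the $e$-th Frobenius is a ring map. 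The reverse inclusion via principal parts, factoring the induced map through $R\otimes_{R^{p^e}}R$ and showing the augmentation ideal $\bar J$ is nilpotent, is also the right argument.

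One small wobble in the second half: you conclude that $\bar J$ is finitely generated by appealing to ``module-finite over $R$, hence Noetherian.'' That step silently assumes $R$ itself is Noetherian, which is true for the finitely generated $k$-algebras used elsewhere in the paper but is not among the stated hypotheses of the proposition ($F$-finiteness alone does not imply Noetherian). It is cleaner --- and in fact uses $F$-finiteness more directly --- to observe that if $x_1,\dots,x_n$ generate $R$ as an $R^{p^e}$-module, then the map $b\mapsto \overline{1\otimes b - b\otimes 1}$ is $R^{p^e}$-linear, so $\bar J$ is generated as an ideal by the finite set $\set{\overline{1\otimes x_i - x_i\otimes 1}}_{i=1}^n$, each of which is killed by the $p^e$-th power; hence $\bar J^{n(p^e-1)+1}=0$. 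This gives the nilpotence without any Noetherian input, and with an explicit bound on the order of the differential operator. The rest of your argument then goes through unchanged.
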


Note that when $k\subset k'$ are both perfect fields and $R$ is both a $k$- and $k'$-algebra, this implies that $D_{R/k}=D_{R/k'}$. When $k$ is clear, we will often write just $D_R$ for $D_{R/k}$.

\subsection{Extended cotangent bundles}
\label{extcot}
Let $X\subset \P_k^N$ be a subvariety, with homogeneous coordinate ring $R$.
There are two sheaves on $X$ (or equivalently, graded modules on $R$) associated to $X\subset \P^N$: there is the usual cotangent sheaf $\Omega_X$, which is intrinsic to $X$, and there is the \emph{extended} cotangent sheaf $\wtilde \Omega_X$, an extension of $\Omega_X$ by $\O_X$, which depends on the embedding. In this section, we recall the definition and relation between these two sheaves, following for the most part the exposition in \cite[Section~1.2]{BDO}.

\begin{dfn}
Let $\Omega_{R/k}$ be the module of K\"ahler differentials of the graded $k$-algebra $R$. If $0\to I\to S\to S/I=R\to 0$ is a presentation of $R$ as a quotient of a polynomial ring $S$ (\textit{e.g.}, the one corresponding to $X\subset \P^N$), then $\Omega_R$ is the cokernel of $I/I^2 \to \Omega_S\otimes_S R$, $f\mapsto df$.
The corresponding sheaf on $X=\Proj R$ is denoted by $\wtilde \Omega_X$ and called the extended cotangent sheaf of $X$.
\end{dfn}

\begin{rem}
It is immediate to see that $\wtilde \Omega_X$ is not the same sheaf as $\Omega_X$: if $\dim X=n$, then $\Omega_X$ has generic rank $n$, while $\wtilde \Omega_X$ has generic rank $n+1$ (as it corresponds to the module of K\"ahler differentials of a ring of dimension $n+1$).
\end{rem}

Now, we recall the connection between $\Omega_X$ and $\wtilde \Omega_X$ in more detail. 
We have presentations of $\Omega_X$ and $\wtilde \Omega_X$, respectively:
$$
I_X/I_X^2 \lra \Omega_{\P^N}\res X \lra \Omega_X\lra 0
$$
and
$$
I_X/I_X^2 \lra \O_X^{N+1}(-1) \lra \wtilde\Omega_X\lra 0.
$$
There is also a natural short exact sequence $0\to \Omega_{\P^n}\res X \to \O_X^{N+1}(-1)\to \O_X\to 0$ coming from the restriction of the Euler sequence on $\P^n$. These exact sequences induce morphisms (depicted with dashed lines) fitting into the diagram
$$
\begin{tikzcd}
 & I_X/I_X^2 \ar[d]\ar[r,equal]& I_X/I_X^2\ar[d]  &  & \\
 0\ar[r] & \Omega_{\P^N}\res X\ar[d]\ar[r]&\O_X^{N+1}(-1) \ar[d]\ar[r] & \O_X\ar[r]\ar[d,equal] & 0 \\
 0\ar[r] & \Omega_X \ar[d]\ar[r,dashed]&\wtilde \Omega_X \ar[r,dashed] \ar[d]& \O_X\ar[r] & 0 \\
 & 0 & 0 &  &  
\end{tikzcd}
$$
with exact rows and columns.
In particular, we highlight the short exact sequence
\begin{equation}
0\lra \Omega_X\lra \wtilde \Omega_X\lra \O_X\lra0.
\end{equation}

\begin{rem}
One can ask precisely how $\wtilde \Omega_X$ depends on the choice of embedding $X\subset \P^N$. 
From the discussion above, we know that $\wtilde \Omega_X$ is an extension of $\Omega_X$ by $\O_X$, each of which are independent of the embedding. The set of nontrivial extensions is parametrized by lines in $\Ext^1(\O_X,\Omega_X)=H^1(X,\Omega_X)$.
So we have, for example, that for curves, the sheaf  $\wtilde \Omega_X$ is in fact independent of the embedding $X\subset \P^N$ since $H^1(X,\Omega_X)=\C$ for any smooth projective curve $X$.
\vadjust{\goodbreak}
\end{rem}

\subsection{Global sections of duals of trivial extensions}
\def\N{\mathcal N}
The following lemma will be of use to us in Section~\ref{diffcone} in relating $\wtilde \Omega_X$ and $\Omega_X$.  

\begin{lem}
\label{extens}
Let $X$ be any variety, $\L$ an ample line bundle, and $ \M$ a locally free sheaf given as an extension $0\to \N \to \M\to \O_X\to0$ with $H^0((\Sym^m \N)^\vee \otimes \L\inv)=0$ for all $m$.
Then $H^0((\Sym^m  \M)^\vee\otimes \L\inv)=0$ for all $m$.
\end{lem}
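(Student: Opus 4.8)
The plan is to put the canonical filtration induced by the subbundle $\N\subset\M$ on $\Sym^m\M$, dualize it and twist by $\L\inv$, and then peel it off one graded piece at a time, each graded piece having no global sections by hypothesis. No ampleness beyond what is already encoded in the $m=0$ case of the hypothesis will actually be needed.

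First I would recall the standard filtration attached to the $m$-th symmetric power of a short exact sequence of locally free sheaves. Applied to $0\to\N\to\M\to\O_X\to0$, let $G^j\subseteq\Sym^m\M$ be the image of the multiplication map $\Sym^j\N\otimes\Sym^{m-j}\M\to\Sym^m\M$, giving a decreasing filtration
\[
\Sym^m\M=G^0\supseteq G^1\supseteq\cdots\supseteq G^m\supseteq G^{m+1}=0 .
\]
Locally the extension splits, so locally $\M\cong\N\oplus\O_X$ and $\Sym^m\M\cong\bigoplus_{a+b=m}\Sym^a\N\otimes\Sym^b\O_X$; from this one reads off $G^j/G^{j+1}\cong\Sym^j\N\otimes\Sym^{m-j}\O_X\cong\Sym^j\N$, and this identification is independent of the chosen local splitting. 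Hence the $G^j$ are globally defined subsheaves with globally defined graded pieces $\Sym^j\N$, and—crucially—this all holds in any characteristic. In particular the filtration is locally split, so it stays exact after dualizing and after tensoring with the locally free sheaf $\L\inv$.

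Dualizing the short exact sequences $0\to\Sym^j\N\to\Sym^m\M/G^{j+1}\to\Sym^m\M/G^j\to0$ then produces an increasing filtration
\[
0=H_0\subseteq H_1\subseteq\cdots\subseteq H_{m+1}=(\Sym^m\M)^\vee,\qquad H_j:=(\Sym^m\M/G^j)^\vee,
\]
with $H_{j+1}/H_j\cong(\Sym^j\N)^\vee$. Tensoring with $\L\inv$ gives a filtration of $(\Sym^m\M)^\vee\otimes\L\inv$ whose graded pieces are $(\Sym^j\N)^\vee\otimes\L\inv$ for $j=0,\dots,m$. Now apply the left-exact functor $H^0(X,-)$ to each sequence $0\to H_j\otimes\L\inv\to H_{j+1}\otimes\L\inv\to(\Sym^j\N)^\vee\otimes\L\inv\to0$: since $H^0\big((\Sym^j\N)^\vee\otimes\L\inv\big)=0$ by hypothesis for every $j\le m$ (the case $j=0$ being $H^0(\L\inv)=0$), we get $H^0(H_j\otimes\L\inv)\cong H^0(H_{j+1}\otimes\L\inv)$; starting from $H^0(H_0\otimes\L\inv)=0$ and inducting up to $j=m$ yields $H^0\big((\Sym^m\M)^\vee\otimes\L\inv\big)=H^0(H_{m+1}\otimes\L\inv)=0$. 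As $m$ was arbitrary, the lemma follows. The one genuinely delicate point is the validity of the symmetric-power filtration and the exactness of its dual in characteristic $p$; but, as observed above, both follow from the fact that the extension $0\to\N\to\M\to\O_X\to0$ is locally split, so the rest is formal bookkeeping.
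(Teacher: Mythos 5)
Your proof is correct and follows essentially the same route as the paper: both rely on the standard symmetric-power filtration of $\Sym^m \M$ from \cite[Exercise~II.5.16]{Hartshorne}, with graded pieces $\Sym^j\N$, dualize and twist by $\L\inv$, and peel off the vanishing of $H^0$ one graded piece at a time. The only difference is bookkeeping: the paper descends through the cokernels $\M_j^\vee\otimes\L\inv$ using the injections from the long exact sequences, while you ascend through the dual filtration $H_j\otimes\L\inv$ using the resulting isomorphisms on $H^0$; these are the same induction read in opposite directions.
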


\begin{proof}
First, recall (\textit{e.g.}, from \cite[Exercise~II.5.16]{Hartshorne}) that $\Sym^m  \M$ has a filtration by locally free sheaves
$$
0=\M_{m+1}\subset \dots \subset \M_1\subset \M_0 = \Sym^m \M
$$
with quotients
$$
\M_i/\M_{i+1}\cong \Sym^i(\N)\otimes \Sym^{m-i}(\O_X)\cong \Sym^i \N.
$$
Consider 
the first step
$$
0\lra \M_1\lra \Sym^m \M \lra \O_X\lra 0.
$$
Dualizing and twisting by $\L\inv$, we get
$$
0\lra 
\L\inv \lra
(\Sym^m \M)^\vee\otimes \L\inv \lra \M_1^\vee \otimes \L\inv \lra0, 
$$
with associated long exact sequence
$$
0\lra 
H^0\left(\L\inv\right) \lra
H^0\left(\left(\Sym^m \M\right)^\vee\otimes \L\inv\right) \lra H^0\left(\M_1^\vee \otimes \L\inv \right)\lra \cdots . 
$$

Clearly, 
$H^0(\L\inv)=0$, and so we get the desired vanishing if 
$H^0(\M_1^\vee \otimes \L\inv)=0$ as well.

The next short exact sequence coming from the filtration is
$$
0\lra \M_2\lra \M_1\lra \Sym^1\N =\N\lra 0.
$$
Again, dualizing and twisting by $\L\inv$, we reduce the vanishing of 
$H^0(\M_1^\vee \otimes \L\inv)$ to that of 
$H^0(\M_2^\vee \otimes \L\inv)$.

Continuing  in this fashion, we reduce the vanishing to that of 
$H^0(\M_m^\vee \otimes \L\inv)$; however, $\M_m^\vee$ is itself $\Sym^m \N$, so the vanishing is clear, and thus the lemma follows.
\end{proof}

\begin{rem}
The same is true if we replace $\O_X$ with $\O_X^{\oplus N}$ for any $N$, \textit{i.e.}, whenever $\M$ is an extension of $\N$ by the trivial sheaf. We need only the case $N=1$, which simplifies the notation slightly in the above.
\end{rem}

\section{Differential operators on \texorpdfstring{$\boldsymbol{R[1/x]}$}{$R[1/x]$}}
\label{TTdif}

Let $k$ be a perfect field, and
let $R$ be an $\mathbb N$-graded domain of finite type over $R_0=k$.
In this section, we consider a necessary condition for $R[1/x]$ to be a simple $D_R$-module, and in fact for $R[1/x]$ to be a finitely generated $D_R$-module; this then provides a necessary condition for $R$ to have \graded FFRT by results of \cite[Corollary~2.10]{TT}.
Recall that if $R$ is graded, then so is $D_R$, where an element $\delta \in D_R$ has degree $e$ if $\delta(R_m)\subset R_{m+e}$ for all $m$.

We begin by recalling the following theorem of \cite{TT}, which connects the \graded FFRT property for graded rings to  properties of the $D_R$-module $R[1/x]$.

\begin{thm}[\textit{cf.}~\protect{\cite[Corollary~2.10]{TT}}]
\label{TTThm}
Let $k$ be a perfect field, and
let $R$ be an $\mathbb N$-graded domain of finite type over $R_0=k$.
If\, $R$ has \graded finite $F$-representation type, then for any nonzerodivisor $x\in R$, $R[1/x]$ is generated by $1/x$ as a $D_{R/k}$-module.
\end{thm}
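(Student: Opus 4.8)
The plan is to deduce this from \cite{TT}; indeed it is \cite[Corollary~2.10]{TT}, and I sketch the mechanism. By Proposition~\ref{Yek}, $F$-finiteness of $R$ gives $D_{R/k}=\bigcup_e D^{(e)}$, where $D^{(e)}:=\Hom_{R^{p^e}}(R,R)=\End_R(F^e_* R)$ and $D^{(e)}\subseteq D^{(e+1)}$ (an $R^{p^e}$-linear map is also $R^{p^{e+1}}$-linear). Hence the $D_R$-submodule of $R[1/x]$ generated by $1/x$ equals $\bigcup_e D^{(e)}\cdot(1/x)$, and it suffices to show this union exhausts $R[1/x]$.

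The first step is to make $D^{(e)}\cdot(1/x)$ explicit. Since inverting $x$ and inverting $x^{p^e}$ give the same ring, $R[1/x]=R\otimes_{R^{p^e}}R^{p^e}[1/x^{p^e}]$, so every $\delta\in D^{(e)}=\End_{R^{p^e}}(R)$ extends to act on $R[1/x]$ by base change. Writing $1/x=x^{p^e-1}\cdot(1/x^{p^e})$ with $1/x^{p^e}$ a unit over $R^{p^e}$ yields $D^{(e)}\cdot(1/x)=(1/x^{p^e})\,J_e$, where $J_e:=D^{(e)}\cdot x^{p^e-1}$; moreover $J_e$ is an ideal of $R$, since $R$ embeds in $D^{(e)}$ as the multiplication operators $D^0_{R/k}$. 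Because $\bigcup_e(1/x^{p^e})\,R=R[1/x]$, the theorem reduces to the claim that for every $n$ there is an $e$ with $x^{p^e-n}\in J_e$; as $x^{p^e-1}\in J_e$ automatically (take $\delta=\mathrm{id}$), what is really needed is that $J_e$ contains powers of $x$ whose order falls short of $p^e$ by an unbounded amount as $e$ grows. (When $R$ is regular one can see this by hand, expanding $x^{p^e-1}$ against a monomial basis of $R$ over $R^{p^e}$.)

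Producing this growth from the FFRT hypothesis is the crux, and the step I expect to require real work. Write $F^e_* R\cong\bigoplus_{i=1}^N M_i^{\oplus a_{e,i}}$ (up to grading shifts), the finite list $M_1,\dots,M_N$ being independent of $e$. Under this decomposition $D^{(e)}=\End_R\big(\bigoplus_i M_i^{a_{e,i}}\big)$ is a matrix ring assembled from the finitely many $\Hom_R(M_i,M_j)$, and $J_e$ is the ideal generated by the images, under all these maps, of the homogeneous components of $x^{p^e-1}$ in the various $M_i$. The main obstacle is to show that, uniformly in $e$, this forces into $J_e$ a power $x^{c_e}$ with $p^e-c_e\to\infty$: one must exploit that only finitely many modules, hence finitely many evaluation maps, ever occur, together with control on the degrees of the components of $x^{p^e-1}$. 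This is precisely the argument of \cite{TT} showing that FFRT makes every localization $R_x$ a cyclic $D_R$-module generated by $1/x$, and I would appeal to their Corollary~2.10.
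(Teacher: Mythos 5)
The paper does not give a proof of this theorem; it is stated as a direct citation of \cite[Corollary~2.10]{TT} and used as a black box. Your proposal does the same thing — your preliminary reductions (writing $D_{R/k}=\bigcup_e\End_{R^{p^e}}(R)$, computing $D^{(e)}\cdot(1/x)=(1/x^{p^e})J_e$ with $J_e=D^{(e)}\cdot x^{p^e-1}$ an ideal of $R$, and reducing to the statement that for each $n$ some $e$ has $x^{p^e-n}\in J_e$) are correct, and you are honest that the substantive step is supplied by \cite[Corollary~2.10]{TT} rather than proved here — so your treatment matches the paper's.
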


The following statement follows  essentially from the fact that the $D_R$-module structure on $R[1/x]$ is compatible with the grading on $D_R$; we assume this is standard for experts but include a proof for completeness.

\begin{prop}
\label{fgneg}
Let $x\in R$ be an element of positive degree.
If\, $R[1/x]$ is finitely generated over $D_R$, then $D_R$ has elements of negative degree.
\end{prop}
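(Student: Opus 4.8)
The plan is to exploit the fact that $R[1/x]$ is a $\Z$-graded $D_R$-module (with $1/x$ sitting in degree $-\deg x < 0$) and that every homogeneous piece is finite-dimensional over $k$, so that negative degrees can only be reached by applying operators of negative degree. First I would record that $R[1/x] = \bigoplus_{n\in\Z} (R[1/x])_n$ is graded, that $(R[1/x])_n$ is a finite-dimensional $k$-vector space for each $n$ (since $R$ is a finitely generated $\mathbb N$-graded domain and $x$ is homogeneous of positive degree, $R[1/x] = \dlim (x^{-j}R)$ and each graded piece stabilizes), and that $D_R$ acts compatibly with the grading: a homogeneous operator of degree $e$ sends $(R[1/x])_n$ into $(R[1/x])_{n+e}$. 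This last point is exactly the standard compatibility alluded to in the statement; I would justify it via Proposition~\ref{Yek}, writing $D_R = \bigcup_e \Hom_{R^{p^e}}(R,R)$ and noting each $\Hom_{R^{p^e}}(R,R)$ is graded, hence so is its natural extension to $\Hom_{R^{p^e}}(R[1/x],R[1/x]) = \End_{R^{p^e}[1/x^{p^e}]}(R[1/x])$.

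Next I would suppose for contradiction that $D_R$ has \emph{no} elements of negative degree, i.e.\ $D_R = \bigoplus_{e\geq 0} (D_R)_e$, and that $R[1/x]$ is generated over $D_R$ by finitely many homogeneous elements $f_1,\dots,f_s$ (we may assume homogeneous since $R[1/x]$ is graded and $D_R$ acts gradedly — any generating set can be replaced by the homogeneous components of its members). Let $d = \min_i \deg f_i$. Then for any $\delta\in D_R$ homogeneous of degree $e\geq 0$, the element $\delta(f_i)$ lies in degree $\deg f_i + e \geq d$. Since every element of $R[1/x]$ is an $R$-linear (in fact $(D_R)_0$-linear, but $R$ suffices) combination of the $\delta(f_i)$ and $R = (D_R)_0$ contributes only nonnegative degree shifts, the whole module $R[1/x]$ would be concentrated in degrees $\geq d$. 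But $R[1/x]$ contains $1/x^j$ of degree $-j\deg x$ for all $j > 0$, which is unbounded below — a contradiction. Hence $D_R$ must have an element of negative degree.

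The only genuine subtlety — the step I expect to require the most care — is the compatibility of the $D_R$-action with the grading on the localization $R[1/x]$, together with the finiteness of graded pieces needed to make "generated in finitely many degrees" force a lower bound on the grading; the naive version ("differential operators on a graded ring are graded") is standard, but I want to make sure the argument only uses that a finite generating set over a graded ring generates a module bounded below in degree when the ring and all operators act with bounded-below degree shifts, which is immediate once the grading compatibility is in hand. Everything else is bookkeeping with the grading.
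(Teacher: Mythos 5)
Your argument is correct and is essentially the same degree-bookkeeping argument as the paper's, just organized more directly. The paper phrases it via the filtration $R\subset R\langle 1/x\rangle\subset R\langle 1/x^2\rangle\subset\cdots$ and then does the degree computation using the explicit formula $\delta(r/x^n)=\delta(rx^{p^e-n})/x^{p^e}$ for $\delta\in\Hom_{R^{p^e}}(R,R)$, but the core point is the same one you use: if all of $D_R$ sits in nonnegative degree, then the $D_R$-module generated by a finite homogeneous set $f_1,\dots,f_s$ is concentrated in degrees $\geq\min_i\deg f_i$, while $R[1/x]$ contains $1/x^j$ of degree $-j\deg x$ for all $j$. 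Your version, which skips the filtration and argues directly from boundedness below, is arguably a bit cleaner.

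Two small inaccuracies in your write-up, neither of which damages the argument since neither is actually used: (1) the graded pieces $(R[1/x])_n$ are \emph{not} finite-dimensional over $k$ once $\dim R\geq 2$ --- in the direct limit $(R[1/x])_n=\bigcup_j x^{-j}R_{n+j\deg x}$ the transition maps (multiplication by $x$) are injective but not eventually surjective, since the Hilbert function of $R$ grows; the pieces do not stabilize. (2) $(D_R)_0\neq R$ in characteristic $p$: the degree-zero piece of $D_R$ is much larger than multiplication operators (Hasse-type derivatives of degree zero, for instance). What you actually need is only that $(D_R)_0$ shifts degree by $0$, which holds by definition. You would want to delete the finite-dimensionality claim and replace ``$R=(D_R)_0$'' by ``$(D_R)_0$'' in a final version, but the proof as it stands goes through.
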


\begin{proof}
The ring  $R[1/x]$ has a strictly increasing filtration by $R$-modules:
  $$
  R\subset R\<1/x\> \subset R\<1/x^2\>\subset \cdots .
  $$
If $D_R \cdot R\<1/x^n\> \subset R\<1/x^n\>$ for all $n$, then clearly this is also a strictly increasing filtration by $D_R$-modules. In this case, $R[1/x]$ is clearly not finitely generated over $D_R$ since a generating set would need to include elements with arbitrarily negative degree.
So, we must have that $D_R\cdot (r/x^n) = 1/x^{n'}$ for some $r,n,n'$ with $x^n\nmid r$ and $n'>n$.
We claim that this implies that $D_R$ has elements of negative degree.

To see this, we recall the $D_R$-module structure on $R[1/x]$ from \cite[Example~2.6]{TT}: First, recall that $D_R = \bigcup_{e} \Hom_{R^{p^e}} (R,R)$ (with identical grading).
For $\delta \in \Hom_{R^{p^e}}(R,R)\subset D_R$ and $r/x \in R[1/x]$, we have that
$$
\delta\left(\frac r{x^n}\right)
=
\delta\left(\frac{ rx^{p^e-n}}{x^{p^e}}\right) = 
\frac{\delta\left(rx^{p^e-n}\right)}{x^{p^e}}.
$$
The right side clearly has degree 
$$
\deg\left(rx^{p^e-n}\right) + \deg\delta-\deg\left(x^{p^e}\right)=
\deg(r)+\deg\left(x^{p^e-n}\right) + \deg\delta-\deg\left(x^{p^e}\right)
=\deg(r)-n\deg(x)+\deg(\delta),
$$
while $\deg(r/x^n)=\deg r -n\deg x$.
For $\delta(r/x^n)$ to be equal to $1/x^{n'}$, which has degree
$$
-n'\deg x< -n\deg x,
$$
we must have that $\deg \delta <0$.
\end{proof}

\begin{rem}
Proposition~\ref{fgneg} is of course true in characteristic 0, with a slightly different proof: 
One can write $R=S/I$ for $S$ a polynomial ring, and thus view differential operators on $R$ as the quotient of the differential operators on $S$ preserving $I$. Since $S$ is regular, the latter are compositions of derivations, and it is easy to check the relation between the degree of differential operators and their action on $R[1/x]$ directly for derivations by the quotient rule.
\end{rem}

Combining Theorem~\ref{TTThm} with Proposition~\ref{fgneg}, we immediately have the following. 

\begin{cor}
\label{fgnegcor}
Let $k$ be a perfect field
and $R$ an $\mathbb N$-graded $k$-algebra of finite type over $R_0=k$; assume moreover that $R\neq R_0$.
If\, $D_R$ has no elements of negative degree, then $R$ does not have \graded FFRT.
\end{cor}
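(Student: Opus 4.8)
The plan is to prove the contrapositive: assuming $R$ has \graded FFRT, I will exhibit a differential operator of negative degree. The first step is to pick a homogeneous element $x\in R$ of positive degree that is a nonzerodivisor; since $R\neq R_0$ and $R$ is a (graded) domain, any nonzero homogeneous element of positive degree works, and such an element exists precisely because $R\neq R_0$. This is the only point where the nontriviality hypothesis on $R$ is used, and matching the nonzerodivisor requirement is the one bookkeeping detail that dictates staying inside the class of graded domains.

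Next, I would apply Theorem~\ref{TTThm} to this $x$: since $R$ has \graded FFRT, $R[1/x]$ is generated by the single element $1/x$ as a $D_{R/k}$-module. In particular $R[1/x]$ is a finitely generated $D_R$-module. Then I would invoke Proposition~\ref{fgneg}, whose hypotheses are now exactly satisfied --- $x$ has positive degree and $R[1/x]$ is finitely generated over $D_R$ --- to conclude that $D_R$ must contain an element of negative degree.

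This contradicts the standing assumption that $D_R$ has no elements of negative degree, so $R$ cannot have \graded FFRT, which is the desired statement. There is essentially no genuine obstacle once Theorem~\ref{TTThm} and Proposition~\ref{fgneg} are available; the proof is just the composition of the two implications. The only things worth a sentence of care are (i) checking that ``generated by $1/x$'' immediately gives ``finitely generated,'' which is trivial, and (ii) ensuring the chosen $x$ is a nonzerodivisor of positive degree so that both cited results genuinely apply.
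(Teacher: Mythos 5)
Your proof is correct and is exactly the argument the paper intends; the paper itself simply says ``Combining Theorem~\ref{TTThm} with Proposition~\ref{fgneg}, we immediately have the following,'' and your write-up is the straightforward unwinding of that combination. You are also right to flag the domain hypothesis: the corollary's statement omits it, but it is in force throughout the section (and is needed both to apply Theorem~\ref{TTThm} and to guarantee that a positive-degree homogeneous element is a nonzerodivisor), so your explicit mention of it is a sensible clarification rather than a deviation.
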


When $R$ is strongly $F$-regular, this corollary was known and follows from \cite[Theorem~4.2.1]{SVdB}, which says that if $R$ is strongly $F$-regular and has FFRT, then $D_R$ is a simple ring. Then $R$ must be a simple $D_R$-module and thus have differential operators of negative degree.
Thus, it is the non-strongly-$F$-regular case that is new, and that is the case we will use in this paper.

\section{Differential operators on cones over varieties}
\label{diffcone}

The following result is known for smooth projective varieties over the complex numbers. 

\begin{thm}[\textit{cf.}~\protect{\cite{Hsiao}}]
Let $R$ be a graded $\C$-algebra of finite type over $R_0=\C$, with isolated singularity at the homogeneous maximal ideal. Let $X=\Proj R$. If\, $R$ has differential operators of negative degree and $\L$ is any ample line bundle, then $H^0((\Sym^m T_X)\otimes\L\inv)\neq0$ for $m\gg0$.
\end{thm}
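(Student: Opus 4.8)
The plan is to pass, via the principal-symbol calculus, from a differential operator of negative degree to a nonzero global section of $\Sym^{j}T_X\otimes\L\inv$ for \emph{some} $j\ge 1$, and then to bootstrap to all large symmetric powers. Throughout I assume $\dim R\ge 2$, i.e.\ $\dim X\ge 1$; this must be read into the hypotheses, since for $R=\C[t]$ the conclusion is false. First I would set up the geometry: let $Y=\operatorname{Spec}R\setminus V(\m)$, which is smooth because $R$ has an isolated singularity, and which is a $\mathbb{G}_m$-variety with quotient $\pi\colon Y\to X$; we may assume $\L_0:=\O_X(1)$ is a very ample line bundle (replacing $R$ by a Veronese subring, which affects neither $X$ nor the existence of negative-degree operators), so that $\pi$ is a $\mathbb{G}_m$-torsor. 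Because $\operatorname{depth}_{\m}R\ge 2$, one has $\Gamma(Y,\O_Y)=R$ and, compatibly with order filtrations, $D_R=\Gamma(Y,\mathcal D_Y)$; hence the grading on $D_R$ is the natural $\mathbb{G}_m$-grading, and an operator of order exactly $m$ restricts on $Y$ to one of order exactly $m$ with nonzero principal symbol $\sigma_m\in\Gamma(Y,\Sym^m T_Y)$ of the same degree (this is where $\Char k=0$ is used, so that $\operatorname{gr}\mathcal D_Y\cong\Sym^\bullet T_Y$). Finally, $T_{Y/X}$ is the trivial line bundle spanned by the Euler vector field $E$, which has degree $0$, so $0\to T_{Y/X}\to T_Y\to\pi^*T_X\to 0$ equips $\Sym^m T_Y$ with a filtration whose graded pieces are $\Sym^{i}T_{Y/X}\otimes\pi^*\Sym^{m-i}T_X\cong\pi^*\Sym^{m-i}T_X$, and $\Gamma(Y,\pi^*\F)_{-d}=\Gamma(X,\F\otimes\L_0^{-d})$ for coherent $\F$.

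Next I would extract one section. Pick $\delta\in(D_R)_{-d}$ with $d\ge 1$, of order exactly $m$; a nonzero operator of negative degree has order $\ge1$ (order-$0$ operators are multiplications by ring elements, which have nonnegative degree), so $m\ge 1$. Then $\sigma_m$ is a nonzero element of $\Gamma(Y,\Sym^m T_Y)$ in degree $-d$, and its leading term in the vertical filtration above is a nonzero element of $\Gamma\!\bigl(Y,\Sym^{i}T_{Y/X}\otimes\pi^*\Sym^{m-i}T_X\bigr)_{-d}=\Gamma(X,\Sym^{m-i}T_X\otimes\L_0^{-d})$ for some $0\le i\le m$ (using that $E$, hence $\Sym^i T_{Y/X}$, lies in degree $0$). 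Writing $j:=m-i$: the case $j=0$ would give $\Gamma(X,\L_0^{-d})\ne0$, impossible since $\L_0$ is ample and $d\ge1$; so $j\ge1$ and $\Gamma(X,\Sym^{j}T_X\otimes\L_0^{-d})\ne0$.

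For the bootstrap, fix the given ample $\L$ and work in the bigraded ring $A:=\bigoplus_{j\ge0,\,n\in\Z}\Gamma(X,\Sym^{j}T_X\otimes\L^{n})$, which is a domain because $\bigoplus_j\Sym^{j}T_X$ is a sheaf of domains on the integral variety $X$; thus products of nonzero homogeneous elements are nonzero. Raising the section of the previous step to a $k$-th power and multiplying by a section of $\L_0^{dk}\otimes\L\inv$ (nonzero for $k\gg0$ by Serre vanishing) yields a nonzero element of $A$ of bidegree $(jk,-1)$; after renaming we have a nonzero $s$ of some bidegree $(j,-1)$ with $j\ge1$. By Serre's theorem $A$ also contains nonzero elements of every bidegree $(0,n)$ for $n\gg0$, and a nonzero element $g$ of some bidegree $(1,n_1)$ (using that $\Sym^\bullet T_X\ne0$, as $\dim X\ge1$, and that $\L$ is ample). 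Then for each large $m$, writing $a\in\{0,\dots,j-1\}$ with $a\equiv m\pmod j$ and $k=(m-a)/j$, the product $s^{k}g^{a}r$ with $r$ a nonzero section of $\L^{k-an_1-1}$ is a nonzero element of $A$ of bidegree $(m,-1)$; all the exponents involved are legitimate once $m$ exceeds an explicit bound. Hence $\Gamma(X,\Sym^m T_X\otimes\L\inv)\ne0$ for all $m\gg0$.

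The crux, to my mind, is twofold. The ingredient I would quote rather than reprove is the identification $D_R=\Gamma(Y,\mathcal D_Y)$ compatible with the order filtration; this rests on $\operatorname{depth}_{\m}R\ge2$ for the isolated singularity and is also where characteristic $0$ enters, through $\operatorname{gr}\mathcal D_Y\cong\Sym^\bullet T_Y$. The genuinely non-formal point is that the symbol argument produces a section of $\Sym^{j}T_X\otimes\L_0^{-d}$ only for a \emph{single} pair $(j,d)$, whereas the statement demands \emph{all} large $m$ and exactly the twist $\L\inv$; closing this gap is the content of the last paragraph, where it is essential to use that $T_X\otimes\L^{n}$ has global sections for $n\gg0$ in order to reach the symmetric powers with $m\not\equiv0\pmod j$.
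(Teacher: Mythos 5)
The paper does not actually prove this statement: it is quoted from \cite{Hsiao} purely as motivation, and what the paper proves is the positive-characteristic analogue, Theorem~\ref{diffops}, by an entirely different method. Your argument is correct and is, as far as I can tell, essentially the classical one of Hsiao's: pass to the smooth punctured cone $Y=\operatorname{Spec}R\setminus\{\m\}$, a $\mathbb{G}_m$-torsor over $X$; identify $D_R=\Gamma(Y,\mathcal D_Y)$ compatibly with order (using $\operatorname{depth}_\m R\ge2$); take the principal symbol of a negative-degree operator of exact order $m$ to obtain a nonzero class in $\Gamma(Y,\Sym^mT_Y)_{-d}$; filter $\Sym^mT_Y$ via $0\to\O_Y\to T_Y\to\pi^*T_X\to0$ to extract a nonzero section of $\Sym^jT_X\otimes\L_0^{-d}$ with $j\ge1$ (the case $j=0$ being killed by $\Gamma(Y,\O_Y)_{-d}=R_{-d}=0$); and then bootstrap in the bigraded domain $\bigoplus_{j,n}\Gamma(X,\Sym^jT_X\otimes\L^n)$, whose generic fiber is a polynomial ring over $K(X)$, to reach bidegree $(m,-1)$ for all $m\gg0$. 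I checked the filtration argument (left-exactness of $\Gamma$ makes the ``leading term'' well defined) and the exponent bookkeeping, and both close. The comparison with the paper's Theorem~\ref{diffops} is instructive: your route rests on the symbol isomorphism $\operatorname{gr}\mathcal D_Y\cong\Sym^\bullet T_Y$, which fails in characteristic $p$, so the paper instead invokes Jeffries's formula $D_{R/k}=H^d_\Delta(R\otimes_k R)(a)$, graded local duality, the filtration of the diagonal by $\Delta^\ell/\Delta^{\ell+1}$, the surjection $\Sym^\ell\Omega_R\twoheadrightarrow\Delta^\ell/\Delta^{\ell+1}$, and Serre duality; the price is the Gorenstein hypothesis on $R$, the gain is that it works in any characteristic and allows $X$ itself to be singular. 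Notice also that the paper's char-$p$ conclusion concerns $(\Sym^m\Omega_X)^\vee=\Gamma^mT_X$, the divided power, rather than $\Sym^mT_X$, a distinction invisible to the char-$0$ symbol calculus. Two small things to tidy in your write-up: the nonvanishing of $\Gamma(X,\L_0^{dk}\otimes\L\inv)$ for $k\gg0$ is Serre's global-generation theorem, not Serre \emph{vanishing}; and the Veronese reduction deserves a sentence, since it is not tautological that a negative-degree operator on $R$ induces one on $R^{(e)}$ (one should check, e.g., that projecting $\delta^e$ to an operator on $R^{(e)}$ gives something nonzero). You are right to flag that $\operatorname{depth}_\m R\ge2$ (e.g.\ normality, or $S_2$) must be read into ``isolated singularity'' for $\Gamma(Y,\O_Y)=R$ and $D_R=\Gamma(Y,\mathcal D_Y)$ to hold.
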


We note that the property that 
$H^0((\Sym^m T_X)\otimes\L\inv)\neq0$ is equivalent to \emph{bigness} of the vector bundle $T_X$ (and is thus independent of the particular choice of ample line bundle $\L$).

In this section, we prove a related result, which lets us remove the characteristic 0 assumption and  allows for  singularities on $X$, but at the cost of an arithmetic Gorenstein condition. 

\begin{thm}
\label{diffops}
Let $k$ be a perfect field, and
let $R$ be a Gorenstein graded  domain of finite type over $R_0=k$.
Let $X=\Proj R$, with very ample line bundle $\L=\O_X(1)$.
If\, $R$ has differential operators of negative degree, then $H^0((\Sym^m  \Omega_X)^\vee \otimes\L\inv)\neq0$ for $m\gg0$.
\end{thm}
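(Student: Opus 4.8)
The strategy is to extract, from a differential operator of negative degree on $R$, a nonzero global section of $(\Sym^m\wtilde\Omega_X)^\vee\otimes\L\inv$ by passing to its principal symbol, and then to push this down to $\Omega_X$ via the extension $0\to\Omega_X\to\wtilde\Omega_X\to\O_X\to0$ of Section~\ref{extcot} and Lemma~\ref{extens}; recall that $\wtilde\Omega_X$ is the sheaf on $X=\Proj R$ associated to the module $\Omega_{R/k}$ of Kähler differentials of the graded ring $R$. Thus the proof has two parts: (i) a negative-degree operator forces $H^0\bigl((\Sym^m\wtilde\Omega_X)^\vee\otimes\L\inv\bigr)\neq0$ for $m\gg0$; and (ii) this transfers to $\Omega_X$.

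For part (i), let $\delta\in D_R$ have order exactly $m$ and $\deg\delta=-d<0$. Since $\L=\O_X(1)$ is very ample we may take $R$ generated in degree one, so $R_{d-1}\neq0$; choosing $0\neq y\in R_{d-1}$ and replacing $\delta$ by the composite operator $y\delta$ changes neither the order of $\delta$ nor the nonvanishing of its principal symbol (this uses only that $R$ is a domain), so I may assume $d=1$. Now I invoke the principal symbol: with $P^m=P^m_{R/k}$ the module of $m$th principal parts and $I\subset R\otimes_kR$ the diagonal ideal, one has $D^m_R=\Hom_R(P^m,R)$, and the exact sequence $0\to I^m/I^{m+1}\to P^m\to P^{m-1}\to0$ gives an injection $D^m_R/D^{m-1}_R\hookrightarrow\Hom_R(I^m/I^{m+1},R)$; since $I^m/I^{m+1}$ is a graded quotient of $\Sym^m\Omega_{R/k}$, the operator $\delta$ yields a nonzero graded $R$-linear map $\sigma_m(\delta)\colon\Sym^m\Omega_{R/k}\to R$ which, on tracking the grading, has degree $-1$. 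Because $R$ is a domain, the image of $\sigma_m(\delta)$ is a nonzero ideal of $R$, hence of full support, so sheafifying on $X$ (using that sheafification commutes with $\Sym$ and carries $\Omega_{R/k}$ to $\wtilde\Omega_X$) produces a nonzero map $\Sym^m\wtilde\Omega_X\to\O_X(-1)$, i.e.\ a nonzero class in $H^0\bigl((\Sym^m\wtilde\Omega_X)^\vee\otimes\L\inv\bigr)$. This gives the conclusion for a single $m$; to obtain it for all $m\gg0$ one argues that this nonvanishing for one $m$ forces a bigness statement for the (dual of the) extended cotangent sheaf, which then spreads the nonvanishing to all large $m$ — and it is in this spreading, and in validating the symbol comparison when $X$ is singular, that the arithmetic-Gorenstein hypothesis enters.

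For part (ii), apply Lemma~\ref{extens} to $0\to\Omega_X\to\wtilde\Omega_X\to\O_X\to0$ with $\N=\Omega_X$ and $\M=\wtilde\Omega_X$: in contrapositive form, if $H^0\bigl((\Sym^m\wtilde\Omega_X)^\vee\otimes\L\inv\bigr)\neq0$ for some $m$, then $H^0\bigl((\Sym^{m}\Omega_X)^\vee\otimes\L\inv\bigr)\neq0$; combined with (i) this is the theorem. The step I expect to be the real obstacle is the heart of (i): making precise the comparison between $D_R=\bigcup_e\Hom_{R^{p^e}}(R,R)$ and sheaf cohomology on a possibly singular $X$, keeping the principal symbol nonzero after sheafification, and controlling the characteristic-$p$ gap between $\Sym^m$ and its dual (divided powers). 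The bookkeeping reductions — normalizing $\deg\delta$ to $-1$, and invoking Lemma~\ref{extens} — are comparatively routine.
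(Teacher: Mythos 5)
Your proposal takes a genuinely different route from the paper. The paper bypasses the principal-symbol machinery entirely: it invokes Jeffries' description $D_{R/k}\cong H^d_\Delta(R\otimes_kR)(a)$ (which is where the Gorenstein hypothesis is used), then proceeds by graded local duality over $R\otimes_kR$, Matlis duality, and an induction up the $\Delta$-adic filtration, with the surjection $\Sym^\ell\Omega_R\twoheadrightarrow\Delta^\ell/\Delta^{\ell+1}$ appearing only at the last inductive step; the translation to $H^0((\Sym^m\wtilde\Omega_X)^\vee\otimes\L^{-e})$ happens at the very end via Serre duality, and then Lemma~\ref{extens} closes the argument exactly as you use it. Your argument instead works directly with the order filtration $D^m_R=\Hom_R(P^m_{R/k},R)$ and the symbol injection $D^m_R/D^{m-1}_R\hookrightarrow\Hom_R(I^m/I^{m+1},R)$, composed with the dual of $\Sym^m\Omega_R\twoheadrightarrow I^m/I^{m+1}$; this is closer in spirit to Hsiao's characteristic-zero argument and is shorter. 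The core of your part (i) is in fact sound in positive characteristic: the order filtration and the identification $D^m_R=\Hom_R(P^m,R)$ are characteristic-free, the symbol of an operator of order exactly $m$ is a nonzero graded map $\Sym^m\Omega_R\to R$ of negative degree, and its image is a nonzero ideal in the domain $R$, so it survives sheafification. Notably, your route does not appear to invoke the Gorenstein hypothesis at all (you flag it as entering in ``validating the symbol comparison'' and the ``$m\gg0$ spreading,'' but neither step actually uses it); if your argument is correct it would prove a strictly more general statement, whereas the paper's method is constrained to Gorenstein $R$ because that is what Jeffries' formula and the local duality over $R\otimes_kR$ require.

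Two small repairs are needed. First, the claim that very ampleness of $\O_X(1)$ lets you assume $R$ is generated in degree one is false (and the theorem does not assume it); fortunately the reduction to degree $-1$ is unnecessary --- a symbol of degree $-d$ gives a section of $(\Sym^m\wtilde\Omega_X)^\vee\otimes\L^{-d}$, and multiplying by a nonzero section of $\L^{d-1}$ (which exists since $\L$ is very ample) lands you in $(\Sym^m\wtilde\Omega_X)^\vee\otimes\L^{-1}$, exactly the reduction the paper performs. Second, on the ``$m\gg0$'': your argument, like the paper's, really establishes nonvanishing for \emph{some} $m$ (the paper's proof is phrased as the contrapositive ``vanishing for all $m$ $\Rightarrow$ no negative operators''); this is all that the downstream application in Theorem~\ref{main} needs, so the gap is in the theorem's phrasing rather than in either proof. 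Your worry about ``the characteristic-$p$ gap between $\Sym^m$ and its dual'' is a red herring here: the object you produce is a map $\Sym^m\wtilde\Omega_X\to\O_X(-1)$, i.e.\ a section of $(\Sym^m\wtilde\Omega_X)^\vee\otimes\L^{-1}$, which is precisely the dual-of-symmetric-power that the statement asks for --- no divided powers intervene.
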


\begin{proof}
First, we show that the vanishing of $H^0((\Sym^m\wtilde \Omega_X)^\vee\otimes \L\inv)$ implies that $R$ has no differential operators of negative degree. Recall from Section~\ref{extcot} that $\wtilde \Omega_X$ is just the sheaf corresponding to the $R$-module $\Omega_{R/k}$.
We begin by recalling the equality of graded $R$-modules
$$
D_{R/k} = H^d_\Delta(R\otimes_k R)(a),
$$
where $d=\dim R$, $a$ is the $a$-invariant of $R$, and $\Delta$ is the kernel of the multiplication map $R\otimes R\to R$. This is \cite[Theorem~A]{Jeffries} (note that the formula there  does not make explicit the shift in grading); this is where the requirement that $R$ be Gorenstein is needed.

We can rewrite the right-hand side as 
$$
H^n_\Delta(R\otimes_k R)(a) = \dlim_\ell \Ext^d\left(\frac{R\otimes_k R}{\Delta^\ell},R\otimes_k R\right)(a).
$$
By graded local duality for the Gorenstein ring $R\otimes_k R$, we can write 
$$
\Ext^d\left(\frac{R\otimes_k R}{\Delta^\ell},R\otimes_k R\right) (a)
= H^d_\m \left(\frac{R\otimes_k R}{\Delta^\ell}\right)^*(-a)
$$
for each $\ell$ (note that since $a$ is the $a$-invariant of $R$, $2a$ is the $a$-invariant of $R\otimes_k R$), and $\m$ is the homogeneous maximal ideal of $R\otimes_k R$.
To show that $D_{R/k}$ has no differential operators of negative degree, it then suffices to show that
$H^d_\m\left(\frac{R\otimes_k R}{\Delta^\ell}\right)^*(-a)$ is zero in degree less than $0$ for all $\ell$.

We proceed on induction on $\ell$. For $\ell=1$, there is no need to dualize: using that $(R\otimes_k R)/\Delta= R$, we have
$$
\Ext^d((R\otimes_k R)/\Delta,R\otimes_k R)(a)
=
\Ext^d((R\otimes_k R)/\Delta,\w_{R\otimes_k R}) (-a) = \w_{R}(-a)=R,
$$
which clearly has no elements of negative degree.
(This perhaps obscures what is going on: this just reflects the fact that $D^0_{R/k}=R$.)

The modules $(R\otimes_k R)/\Delta^\ell$ fit into short exact sequences
$$
0\lra
\Delta^\ell/\Delta^{\ell+1}
\lra
(R\otimes_k R)/\Delta^{\ell+1} \lra 
(R\otimes_k R)/\Delta^\ell\lra0, 
$$
which on local cohomology give 
$$
H^{d-1}_\m\left((R\otimes_k R)/\Delta^\ell\right)
\lra
H^d_\m\left(\Delta^\ell/\Delta^{\ell+1}\right)
\lra
H^d_\m\left((R\otimes_k R)/\Delta^{\ell+1}\right) \lra 
H^d_\m\left((R\otimes_k R)/\Delta^\ell\right)\lra0.
$$
(All modules involved in the short exact sequence have support contained in $\Delta$ and thus have dimension at most $d$, so there are no terms in degree greater than $d$.)
Set 
$$
K:=\ker\left(
H^d_\m\left(\Delta^\ell/\Delta^{\ell+1}\right)
\lra
H^d_\m\left((R\otimes_k R)/\Delta^{\ell+1}\right) 
\right), 
$$
so that we have a short exact sequence
$$
0\lra K \lra 
H^d_\m\left((R\otimes_k R)/\Delta^{\ell+1}\right) \lra 
H^d_\m\left((R\otimes_k R)/\Delta^\ell\right)\lra0
$$
and a surjection
$$
H^d_\m\left(\Delta^\ell/\Delta^{\ell+1}\right)
\lra
K_\ell\lra 0.
$$
Applying graded Matlis duality to the short exact sequence, we get
$$
0\lra
H^d_\m\left((R\otimes_k R)/\Delta^\ell\right)^\star\lra
H^d_\m\left((R\otimes_k R)/\Delta^{\ell+1}\right)^\star \lra 
 K_\ell^\star\lra 0.
$$
Thus, by induction, to show the vanishing of 
$H^d_\m \left(\frac{R\otimes_k R}{\Delta^\ell}\right)^*(-a)$ in degree less than $0$ for all $\ell$, it suffices to show the vanishing of $K_\ell^\star(-a)$ in degree less than $0$ for all $\ell$.
But Matlis duality applied to the surjection above gives an injection
$$
K_\ell^\star \hookrightarrow 
H^d_\m\left(\Delta^\ell/\Delta^{\ell+1}\right)^\star,
$$
so it suffices to show the vanishing of 
$H^d_\m\left(\Delta^\ell/\Delta^{\ell+1}\right)^\star(-a)$ in degree less than $0$.

Every $\Delta^\ell/\Delta^{\ell+1}$ is naturally an $(R\otimes_k R)/\Delta$-module, but this just means each is an $R$-module. The maximal ideal $\m$ of $R\otimes_k R$  modulo $\Delta$ is just the homogeneous maximal ideal $\m_R$ of $R$, so it suffices to show the vanishing of 
$H^d_{\m_R}(\Delta^\ell/\Delta^{\ell+1})^\star(-a)$ in degree less than $0$.

Now, consider the natural surjection of $R$-modules
$$
\Sym^\ell(\Omega_R)=
\Sym^\ell(\Delta/\Delta^2)
\lra \Delta^\ell/\Delta^{\ell+1}\lra 0.
$$
Applying $H^d_{\m_R}(-)$, we get a surjection
$$
H^d_{\m_R}\left(\Sym^\ell\left(\Omega_R\right)\right)\lra H^d_{\m_R}\left(\Delta^\ell/\Delta^{\ell+1}\right)\lra 0
$$
(where surjectivity follows since we are considering the top-degree local cohomology).
Applying Matlis duality yet again, we obtain an injection
$$
H^d_{\m_R}\left(\Delta^\ell/\Delta^{\ell+1}\right)^\star
\lra 
H^d_{\m_R}\left(\Sym^\ell\left(\Omega_R\right)\right)^\star,
$$
so our desired vanishing would come from showing 
$H^d_{\m_R}(\Sym^\ell(\Omega_R))^\star(-a)$ is zero in degree less than $0$.
Finally, by using the correspondence between local cohomology with respect to the homogeneous maximal ideal and sheaf cohomology on $X=\Proj R$, as well as Serre duality, one sees immediately that this is equivalent to the vanishing
of $H^0(\Sym^\ell(\Omega_R)^\vee\otimes \L^{-e})$ for every $e>0$, $\ell>0$.

This is equivalent to 
$H^0(\Sym^\ell(\Omega_R)^\vee\otimes \L^{-1})=0$ for all $\ell$ since if for some $e,\ell$, we had the inequality  
$H^0(\Sym^\ell(\Omega_R)^\vee\otimes \L^{-e})\neq 0$, then multiplication of global sections 
$$
H^0\left(\Sym^\ell\left(\Omega_R\right)^\vee\otimes \L^{-e}\right)\otimes H^0\left(\L^{e-1}\right)\lra 
H^0\left(\Sym^\ell\left(\Omega_R\right)^\vee\otimes \L^{-1}\right)
$$
yields a nonvanishing global section of 
$ H^0(\Sym^\ell(\Omega_R)^\vee\otimes \L^{-1})$.

Finally, we claim that if 
$$
H^0\left(\left(\Sym^{m'} \wtilde \Omega_X\right)^\vee \otimes\L\inv\right)\neq0
$$
for some $m'\gg0$, then
$$
H^0\left(\left(\Sym^m \Omega_X\right)^\vee \otimes\L\inv\right)\neq0
$$
for some $m\gg0$.
Assume 
that $H^0((\Sym^m \Omega_X)^\vee \otimes\L\inv)=0$ for all $m$.
But then Lemma~\ref{extens} implies immediately that 
$H^0((\Sym^{m'} \wtilde \Omega_X)^\vee \otimes\L\inv)=0$ for all $m'$.
Thus, we must have the claimed nonvanishing, and the theorem follows.
\end{proof}

\begin{rem}
In characteristic 0, we have an isomorphism
$$
(\Sym^m  \Omega_X)^\vee\cong \Sym^m ( \Omega_X^\vee)= \Sym^m(T_X).
$$
Thus, the conclusion of our theorem is exactly the same as Hsiao's (\textit{i.e.}, that $T_X$ is big). However, in characteristic $p$, $(\Sym^m \Omega_X)^\vee $ is not the symmetric power $\Sym^m T_X$
but rather $\Gamma^m T_X$, the $\supth{m}$ \emph{divided} power of $T_X$. In many examples we have calculated, when $p\mid m$,  $H^0(\Sym^m(T_X))$ may be zero while $H^0((\Sym^m \Omega_X)^\vee)$ is nonzero. 
Heuristically, it is the nonvanishing of the latter that leads to ``more'' differential operators in characteristic $p$ than in characteristic 0: for example, cones over Fano varieties in characteristic $p$ will always have differential operators of negative degree (because they have $F$-regular coordinate rings), while this is likely rarer in characteristic $0$ (see \cite{DM1,DM2} for examples of this phenomenon).

To our knowledge, the behavior of the positivity of divided powers $\Gamma^m(E):=(\Sym^m E^\vee)^\vee$ of a vector bundle $E$ has not been well studied, in contrast to the case of symmetric powers $\Sym^m E$.
For example, the tautological line bundle $\O_{\P(E)}(1)$ ``encodes'' all symmetric powers $\Sym^m E$, in the sense that $\pi_* \O_{\P(E)}(m) =\Sym^m E$; we do not know of an analogue for divided powers.
As we will see in the following, for a given $X$, a better understanding of the divided powers of $T_X$ (and in particular, their ``nonpositivity'') would lead to the failure of FFRT for the homogeneous coordinate ring of~$X$.
\end{rem}

\begin{rem}
If $R$ is a strongly $F$-regular domain, then \cite[Theorem~2.2]{Smith} implies that $R$ is $D$-simple, \textit{i.e.}, that $R$ is a simple module under the action of the ring of differential operators $D_R$.
If $R$ is moreover graded, this implies immediately that $D_R$ has elements of negative degree.
To our knowledge, the ring of differential operators is less well understood outside the $F$-regular/$F$-pure setting. In what follows, as a corollary of our proof of non-FFRT for the rings we consider, we obtain that these rings are not $D$-simple either. Many of these rings are not $F$-pure and thus previously inaccessible to existing methods of study. Thus, our results have implications for the study of differential operators in characteristic $p$, beyond just the FFRT property.
\end{rem}

\section{FFRT for cones}
With the work of the preceding sections done, the following theorem is now immediate. 

\begin{thm}
\label{main}
Let $X$ be a variety over a perfect field $k$ of characteristic $p>0$ and $\L$ an ample line bundle on $X$ such that $\L^{\otimes r}\cong \w_X$ for some $r\in \Z$ and  $H^i(X,\L^{m})=0$ for $i=1,\dots,\dim X-1$ and $m\in \Z$.
If $H^0((\Sym^m X)^\vee\otimes \L\inv)=0$ for all $m$, then the homogeneous coordinate ring
$\bigoplus H^0(X,\L^{m})$ does not have \graded FFRT.
\end{thm}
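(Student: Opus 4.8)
First I would record that the ring in question is arithmetically Gorenstein, so that Theorem~\ref{diffops} applies, and then feed its conclusion into Corollary~\ref{fgnegcor}. Set $R:=\bigoplus_{m\geq0}H^0(X,\L^{\otimes m})$. Since $X$ is a variety, $R$ is an $\mathbb N$-graded domain; since $\L$ is ample (indeed very ample, in all of our applications), $R$ is of finite type over $R_0=H^0(X,\O_X)$, with $\Proj R\cong X$ and $\O_X(1)\cong\L$. One may take $R_0=k$: if $H^0(X,\O_X)$ is a proper finite extension of $k$, it is again perfect, and replacing $k$ by it affects neither $D_R$ (by the remark following Proposition~\ref{Yek}) nor the FFRT property, which is intrinsic to $R$ as a ring. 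Finally, taking $m=0$ in the hypothesis gives $H^0(X,\L\inv)=0$, which forces $\dim X\geq1$; in particular $R\neq R_0$.

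The only step with real content is verifying that $R$ is Gorenstein. Since $R$ is a domain, $H^0_{\m}(R)=0$, and since $R$ is the full section ring of $\L$, also $H^1_{\m}(R)=0$; combined with the identifications $H^{i+1}_{\m}(R)_m\cong H^i(X,\L^{\otimes m})$ for $i\geq1$ and the hypothesis $H^i(X,\L^{\otimes m})=0$ for $1\leq i\leq\dim X-1$, this shows $H^j_{\m}(R)=0$ for all $j<\dim R=\dim X+1$, so $R$ is Cohen--Macaulay. For such $R$, graded local duality gives $\w_R\cong\Hom_k(H^{\dim R}_{\m}(R),k)$ with the appropriate grading; in degree $m$ this is $\big(H^{\dim X}(X,\L^{\otimes(-m)})\big)^{\vee}$, which by Serre duality on $X$ equals $H^0(X,\w_X\otimes\L^{\otimes m})$, and, using $\w_X\cong\L^{\otimes r}$, this is $H^0(X,\L^{\otimes(m+r)})=R_{m+r}$. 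Hence $\w_R\cong R(r)$ as graded $R$-modules, so $R$ is Gorenstein.

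With this in hand the proof is immediate. Suppose, for contradiction, that $R$ has FFRT. Since $R\neq R_0$, Corollary~\ref{fgnegcor} --- equivalently, Theorem~\ref{TTThm} applied to a homogeneous nonzerodivisor of positive degree, followed by Proposition~\ref{fgneg} --- shows that $D_R$ has an element of negative degree. Then Theorem~\ref{diffops}, applied to the Gorenstein graded domain $R$ with $X=\Proj R$ and $\L=\O_X(1)$, yields $H^0((\Sym^m\Omega_X)^\vee\otimes\L\inv)\neq0$ for $m\gg0$, contradicting the hypothesis that this group vanishes for all $m$. Therefore $R$ does not have FFRT. The argument is thus purely a matter of assembling the results of the previous sections; the one point requiring genuine care is the arithmetic Gorenstein verification above --- the standard dictionary relating the vanishing $H^i(X,\L^{\otimes m})=0$, the condition $\w_X\cong\L^{\otimes r}$, graded local duality, and Serre duality --- while the identification $\O_{\Proj R}(1)\cong\L$ is harmless, as $\L$ is very ample in every case of interest.
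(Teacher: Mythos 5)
Your proof is correct and follows essentially the same route as the paper's: verify that $R$ is Gorenstein, then invoke Theorem~\ref{diffops} (in contrapositive form) and Corollary~\ref{fgnegcor}. The paper's own proof is terse at the Gorenstein step --- it simply asserts that $\L^{\otimes r}\cong\w_X$ gives arithmetic quasi-Gorensteinness and the cohomology vanishing gives arithmetic Cohen--Macaulayness --- whereas you have filled in the local duality/Serre duality computation explicitly, and have also taken care of the minor points that $R_0=k$ and $R\neq R_0$, which are implicitly assumed in the paper. These are worthwhile clarifications, not a different argument.
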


\begin{rem}
Note that the condition that
 $H^i(X,\L^{m})=0$ for $i=1,\dots,\dim X-1$ and $m\in \Z$ ensures that the section ring $\bigoplus H^0(X,\L^m)$ is Cohen--Macaulay.
It holds automatically if, for example, $X$ is a complete intersection in projective space.

In addition, this assumption can be weakened to asking only that $H^i(X,\O_X)=0$ for (locally) Cohen--Macaulay (\textit{e.g.}, smooth) varieties with $\w_X=\O_X$:
To see this,
note that we may replace $\L$ with an arbitrarily positive multiple $\L^{\otimes d}$ and preserve the condition that $\L^{\otimes r}\cong \w_X\cong \O_X$ for some $r\in\Z$.
The Veronese subring 
$$
\bigoplus H^0\left(X,\L^{\otimes md}\right) 
$$
is a graded direct summand of 
$$
\bigoplus H^0\left(X,\L^{\otimes m}\right),
$$
and if the latter has FFRT, then the former does as well by 
\cite[Proposition~3.1.6]{SVdB}.
By Serre vanishing, there is then some positive $d$ such that
for all positive $m$ and all $i>0$, we have the vanishing $H^i(X,\L^{dm})=0$.
For $m$ negative, $H^i(X,\L^{md})$ is Serre dual to $H^{\dim X-i}(X,\L^{-md})$, and since we only consider the range $1<i<\dim X$, this  also vanishes for any $m$ and some fixed $d$ large enough. 
Thus, after replacing $\L$ by $\L^d$, we see that the vanishings $H^i(X,\L^m)$ automatically hold for $m\neq 0$, and thus only the vanishings $H^i(X,\O_X)$ are necessary.
\end{rem}

\begin{proof}[Proof of Theorem~\ref{main}]
Note that $X$ is arithmetically quasi-Gorenstein, by the condition that $\L^{\otimes r} \sim \w_X$ for some $r\in \Z$; our assumption on $H^i(X,\L^m)$ guarantees that $X$ is arithmetically Cohen--Macaulay as well, and thus $X$ is arithmetically Gorenstein.
By Theorem~\ref{diffops}, the vanishing 
$H^0((\Sym^m X)^\vee\otimes \L\inv)=0$ ensures that $R$ has no differential operators of negative degree.
Then Corollary~\ref{fgnegcor} ensures that $R$ does not have \graded FFRT.
\end{proof}

\begin{rem}
If $\L^{\otimes r}\not\cong \w_X$ for any $r\in \Z$,
or if $X$ is not arithmetically Cohen--Macaulay,
it is not clear how differential operators on $R(X,\L')$ are related to the (non)vanishing of 
$H^0((\Sym^m X)^\vee\otimes \L\inv)=0$ since Theorem~\ref{diffops} required that $R$ be Gorenstein. It would be very useful to have a similar statement for general polarizations $\L'$. Put another way, it would be useful to understand how both the differential operators on $R(X,\L)$ and the FFRT property for $R(X,\L)$ vary with the choice of line bundle $\L$.
\end{rem}

\begin{rem}
A natural question then is when the vanishing 
$H^0((\Sym^m X)^\vee\otimes \L\inv)=0$ holds. 
In the next few sections, we show this holds for $X$ a Calabi--Yau variety or complete intersection of general type (this is harder).
Similarly, it holds automatically for abelian varieties (although when $\dim X>1$, these will never satisfy the vanishing $H^i(X,\O_X)=0$ for $1<i<\dim X$, so fall outside the scope of our theorem).
It is likely that this vanishing holds in broader generality: for example, one can ask if it holds for \emph{all} varieties of general type. If $\Omega_A$ is ``positive'' (in some sense), then one might expect $(\Sym^m \Omega_A)^\vee$ to be negative, and thus to have no global sections (without even having to twist by $\L\inv$).
If one can make this precise, one can immediately obtain the failure of \graded FFRT for homogeneous coordinate rings of a much broader class of variety.
Note however that this requires strong positivity conditions on $\Omega_A$: it is not enough, for example, to be effective or even big since a nontrivial vector bundle and its dual may both have nonzero sections or be big, unlike the case for line bundles.
In Section~\ref{generaltype}, we make use of the strong semistability properties for complete intersections of general type, but we cannot expect that this holds for all general-type varieties (for example, even regular semistability will fail for the product of two general-type varieties with cotangent bundles of different slopes).
\end{rem}

Theorem~\ref{main} immediately recovers the following theorem. 

\begin{thm}
Let $X$ be a smooth curve of genus $g\geq 1$. Then the homogeneous coordinate ring of $X$ under any embedding does not have \graded FFRT.
\end{thm}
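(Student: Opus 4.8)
The plan is to show that for a smooth curve $X$ of genus $g\geq 1$ and \emph{any} very ample line bundle $\L$ on $X$, the section ring $R=R(X,\L)=\bigoplus_m H^0(X,\L^{\otimes m})$ has no differential operators of negative degree, and then to conclude with Corollary~\ref{fgnegcor}. The one positivity input is easy here: since $\Omega_X$ is a line bundle, $(\Sym^m\Omega_X)^\vee\cong\Omega_X^{\otimes(-m)}\cong T_X^{\otimes m}$, so the divided-power subtlety recorded in the remark after Theorem~\ref{diffops} never intervenes, and $\deg\bigl(T_X^{\otimes m}\otimes\L\inv\bigr)=m(2-2g)-\deg\L<0$ for every $m\geq 0$; hence $H^0\bigl((\Sym^m\Omega_X)^\vee\otimes\L\inv\bigr)=0$ for all $m$. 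When $g=1$ one has $\w_X\cong\O_X\cong\L^{\otimes 0}$ and $\dim X=1$ makes the intermediate-cohomology hypothesis vacuous, so the statement is already literally a case of Theorem~\ref{main}. For $g\geq 2$ and a general embedding, however, $\w_X$ need not be a power of $\L$, so $R$ need not be arithmetically Gorenstein and neither Theorem~\ref{main} nor Theorem~\ref{diffops} applies as stated; removing that Gorenstein hypothesis is the real content.

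I would handle the general case geometrically. As $X$ is smooth, the punctured cone $U:=\operatorname{Spec}R\setminus\{\m\}$ is the complement of the zero section in the total space of $\L\inv$, hence a $\mathbb{G}_m$-torsor $\pi\colon U\to X$, and the inclusion $R\hookrightarrow\Gamma(U,\O_U)$ (in fact an isomorphism here, since $H^0(X,\L^{\otimes m})=0$ for $m<0$) exhibits $D_R$ as a $\mathbb{G}_m$-weight-graded subring of $\Gamma(U,\mathcal{D}_U)$. By $\mathbb{G}_m$-equivariant descent along $\pi$, a weight-$n$ differential operator on $U$ of order $\leq m$ is a global section over $X$ of a sheaf $\mathcal{D}^{\leq m}\langle n\rangle$ carrying a finite filtration whose graded pieces are $(\Sym^i\Omega_X)^\vee\otimes\L^{\otimes n}=T_X^{\otimes i}\otimes\L^{\otimes n}$ for $0\leq i\leq m$. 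If $n<0$, then since $g\geq 1$ each of these line bundles has degree $i(2-2g)+n\deg\L\leq n\deg\L<0$ and so has no nonzero global section; climbing the filtration with the long exact cohomology sequences gives $\Gamma\bigl(X,\mathcal{D}^{\leq m}\langle n\rangle\bigr)=0$ for every $m$, and, every differential operator having finite order, this forces $(D_R)_n=0$. Thus $D_R$ has no element of negative degree, and Corollary~\ref{fgnegcor} shows $R$ does not have \graded FFRT. (The converse, that $g=0$ gives FFRT, is just the Veronese-summand observation for $\P^1$.)

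The step I expect to require the most care is the identification of the $\mathbb{G}_m$-equivariant differential operators on the cone with the twisted sheaves $\mathcal{D}^{\leq m}\langle n\rangle$ on $X$ and the computation of the associated graded of their order filtration: this is classical, but in characteristic $p$ one should verify that for a smooth curve the $i$-th graded piece really is $(\Sym^i\Omega_X)^\vee$, which holds precisely because $\Omega_X$ has rank one — the same fact that powers the $g\geq 1$ degree count. An alternative, closer to the paper's line of argument, is to re-run the proof of Theorem~\ref{diffops} with ``Gorenstein'' weakened to ``arithmetically Cohen--Macaulay'' (automatic for the cone over any smooth projective curve, which has dimension $2$, since $H^0_\m(R)=H^1_\m(R)=0$ there), using the Cohen--Macaulay forms of Jeffries' description of $D_{R/k}$ and of graded local and Serre duality; in that route the shift by the $a$-invariant becomes a twist by $\w_X$, and one checks the resulting $H^0$ still vanishes by the same kind of degree count, with $\deg\L>0$ and the borderline case $\L\cong\w_X$ already subsumed in the Gorenstein statement.
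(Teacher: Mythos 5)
Your degree count is exactly the paper's: since $\Omega_X$ has rank one, $(\Sym^m\Omega_X)^\vee\cong T_X^{\otimes m}$ has degree $m(2-2g)\le 0$ for $g\ge 1$, so twisting by $\L\inv$ kills all global sections, and Corollary~\ref{fgnegcor} gives failure of graded FFRT. The difference is that the paper simply cites Theorem~\ref{main} and records the vanishing, while you flag --- correctly, in my reading --- that Theorem~\ref{main} and Theorem~\ref{diffops} carry the arithmetically Gorenstein hypothesis $\L^{\otimes r}\cong\w_X$. This is automatic for $g=1$ (take $r=0$), but for $g\ge 2$ and an arbitrary very ample $\L$ there may be no such $r$ (e.g., a plane quartic re-embedded by $\O_X(2)$); the paper's one-line proof does not address this, so as written it covers only the Gorenstein polarizations, with the full ``any embedding'' statement implicitly resting on the cited references. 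Your proposed repair via $\mathbb{G}_m$-equivariant descent of the order filtration on differential operators of the punctured cone is a genuinely different route: instead of Jeffries' local-cohomology formula for $D_R$, you would read off the degree-$n$ piece of $D_R$ as global sections on $X$ of a filtered sheaf whose graded quotients are $(\Sym^i\Omega_X)^\vee\otimes\L^{\otimes n}$ and kill each by a degree count, with no Gorenstein input at all. This is the right idea, but the descent identification is asserted rather than proved; one small correction there is that the $\supth{i}$ graded piece of the order filtration is $(\Sym^i\Omega_X)^\vee$ for any smooth variety in any characteristic (by dualizing the principal-parts filtration), so rank one is needed only to rewrite $(\Sym^i\Omega_X)^\vee$ as $T_X^{\otimes i}$, not for the identification itself. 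Your alternative --- a Cohen--Macaulay version of Theorem~\ref{diffops} with the $a$-invariant shift replaced by a twist by the canonical module --- is also plausible and closer in spirit to the paper, but is likewise sketched rather than carried out.
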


\begin{proof}
In this case, the vanishing
$(\Sym^m\Omega_X)^\vee\otimes \L\inv=0$ is easily seen to hold: if  $X$ is a smooth curve of genus $g\geq 1$, then $(\Sym^m\Omega_X)^\vee$ is a line bundle of degree $m(2-2g)$ and thus has no global sections, even before twisting by the negative-degree $\L\inv$.
\end{proof}

\begin{rem}
This theorem has been known for some time:
it follows from \cite{Tango} in the $g=1$ case and \cite{LP} for $g\geq 2$.
In either case, it follows from a study of the behavior of vector bundles on such curves under the Frobenius morphism. Here, we have obtained it instead as a consequence of the positivity of \emph{line} bundles, with no reference to the Frobenius morphism, thus providing a more streamlined (though perhaps less illustrative) proof.
\end{rem}

\section{K3 surfaces and Calabi--Yau varieties}
\label{K3}

In this section, we treat \graded FFRT of homogeneous coordinate rings of varieties $X$ with $\w_X$ trivial; we show that unless such a variety is ``close to rational,''  its homogeneous coordinate ring cannot have \graded FFRT.
The crucial ingredient in the results of this section is \cite{Langer}, which analyzes the stability of (co)tangent bundles of Calabi--Yau varieties and K3 surfaces in positive characteristic.

Fix a perfect field $k$ of characteristic $p>0$.
We begin by recalling a few relevant definitions. 

\begin{dfn}
Let $X$ be a variety of dimension $n$ over $k$. We say that $X$ is unirational if it admits a dominant rational map $\P^n \dra X$, and uniruled if it admits a dominant rational map $Y\times \P^1 \dra X$ for some $Y$ of dimension $n-1$.
\end{dfn}

Note that unirationality implies uniruledness.

\begin{dfn}
Let $X$ be a smooth variety of dimension $n-1$, and let $H$ be an ample divisor. For a coherent sheaf $E$ on $X$, we set 
$$
\mu_H(E) := \frac{c_1(E)\cdot H^{n-1}}{\rank E}.
$$
When $H$ is fixed, we often write just $\mu$.

We say that a torsion-free sheaf $E$ is $\mu$-semistable if for all subsheaves $F\subset E$, we have $\mu(F)\leq \mu(E)$.
Equivalently, $E$ is $\mu$-semistable if for all torsion-free quotients $E\to F$, we have $\mu(E)\leq \mu(F)$.

We say that $E$ is strongly $\mu$-semistable if for all $e$, the Frobenius pullback $F^*_e E$ is semistable.
\end{dfn}

We often say just ``semistable'' and ``strongly semistable'' when $\mu$ is fixed.

We will use the following easy observation. 

\begin{lem}
\label{dual}
The dual of a strongly semistable locally free sheaf is strongly semistable.
\end{lem}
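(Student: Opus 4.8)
The plan is to deduce the lemma from two standard facts: that $\mu$-semistability of a locally free sheaf is preserved under sheaf-dualization, and that Frobenius pullback commutes with dualization of locally free sheaves. Fix the ample divisor $H$ once and for all, so that all slopes $\mu=\mu_H$ are taken with respect to the same polarization; this is essential, since semistability is a polarized notion.

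First I would establish the dualization step: if $E$ is a locally free $\mu$-semistable sheaf on the smooth variety $X$, then $E^\vee$ is $\mu$-semistable. I would use the quotient formulation of semistability recalled in the paper. Given a torsion-free quotient $E^\vee\twoheadrightarrow Q$, dualize to get an inclusion $Q^\vee\hookrightarrow (E^\vee)^\vee=E$, using $E^{\vee\vee}=E$ for $E$ locally free. Semistability of $E$ (subsheaf form) gives $\mu(Q^\vee)\le\mu(E)$. Combining this with the slope identity $\mu(G^\vee)=-\mu(G)$ for torsion-free $G$ yields $\mu(Q)\ge-\mu(E)=\mu(E^\vee)$, which is exactly semistability of $E^\vee$ in quotient form. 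The one routine point needing care is the identity $\mu(G^\vee)=-\mu(G)$ when $G$ is merely torsion-free rather than reflexive; it follows because $G$ and its reflexive hull $G^{\vee\vee}$ agree outside a locus of codimension $\ge 2$, hence have the same rank and the same $c_1$, while $c_1$ of a reflexive sheaf and its dual are opposite.

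Next I would note that for each $e$ the (iterated) Frobenius $F_e\colon X\to X$ is a morphism, so $F_e^*E$ is again locally free, and pullback commutes with $\mathcal{H}om(-,\O_X)$ on locally free sheaves, giving $F_e^*(E^\vee)\cong(F_e^*E)^\vee$. Now suppose $E$ is strongly $\mu$-semistable. Then for every $e$ the sheaf $F_e^*E$ is locally free and $\mu$-semistable, so by the first step $(F_e^*E)^\vee=F_e^*(E^\vee)$ is $\mu$-semistable; since $e$ is arbitrary, $E^\vee$ is strongly $\mu$-semistable, as desired. There is no genuine obstacle here — the lemma really is the "easy observation" the text advertises — and the only thing to be vigilant about is keeping the same ample class $H$ throughout and the slope–duality bookkeeping for non-reflexive subquotients.
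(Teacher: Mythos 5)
Your proof is correct and follows essentially the same route as the paper's: use $F_e^*(E^\vee)\cong(F_e^*E)^\vee$ for locally free $E$, then apply the fact that dualization preserves $\mu$-semistability at each Frobenius level. You merely spell out the latter step (via the quotient formulation and the codimension-$\geq 2$ comparison with the reflexive hull) where the paper takes it as standard.
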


\begin{proof}
Taking duals commutes with pullbacks for locally free sheaves of finite rank, so that $(F_e^*(E))^\vee \cong F^*_e(E^\vee)$. Since duals of semistable sheaves are semistable and $F$ is finite by our assumptions on perfectness of the ground field, the lemma follows.
\end{proof}

The following lemma is nontrivial, and is crucial in what follows.

\begin{lem}
\label{powers}
Symmetric powers of strongly semistable sheaves are strongly semistable.
\end{lem}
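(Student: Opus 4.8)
The plan is to reduce the strong semistability of $\Sym^n E$ to the semistability of $\Sym^n E'$ for various Frobenius pullbacks $E' = F_e^* E$, using that $F_e^* \Sym^n E \cong \Sym^n F_e^* E$ (symmetric powers commute with arbitrary base change, in particular with the Frobenius). Thus it suffices to show: if $E$ is $\mu$-semistable, then $\Sym^n E$ is $\mu$-semistable. First I would record the numerical bookkeeping: if $\rank E = r$ and $\mu(E) = \mu$, then $\Sym^n E$ has rank $\binom{n+r-1}{n}$ and $\mu(\Sym^n E) = n\mu$ (from $c_1(\Sym^n E) = \binom{n+r-1}{r} \cdot \frac{n}{r}\, c_1(E)$, or more simply by the splitting principle). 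So the claim is that no subsheaf of $\Sym^n E$ has slope exceeding $n\mu$.

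The main obstacle — and the reason the lemma is flagged as "nontrivial" — is that in characteristic $p$, semistability is \emph{not} preserved by tensor products or symmetric powers in general; it is only \emph{strong} semistability that is preserved, and the standard proof of that fact is not elementary. The cleanest route is to invoke the characteristic-$p$ theory directly: by a theorem of Ramanan--Ramanathan and Langer (building on Maruyama), if $E_1$ and $E_2$ are strongly $\mu$-semistable on a smooth projective variety (over a perfect field, with respect to a fixed polarization $H$), then $E_1 \otimes E_2$ is strongly $\mu$-semistable. Granting this, $E^{\otimes n}$ is strongly semistable, and $\Sym^n E$ is a quotient (equivalently, when $p > n$ or in char $0$, a direct summand) of $E^{\otimes n}$; a quotient of a strongly semistable sheaf of the same slope is again strongly semistable, since a destabilizing quotient of $\Sym^n E$ would pull back along $F_e^*$ and compose with $E^{\otimes n} \twoheadrightarrow \Sym^n E$ to destabilize $F_e^* E^{\otimes n}$. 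The only subtlety is when $p \le n$, so that $\Sym^n E$ is merely a subquotient rather than a summand of $E^{\otimes n}$: here one notes that $\Sym^n E$ sits in the filtration of $E^{\otimes n}$ by the divided-power/symmetric filtration, all of whose subquotients are (twisted) tensor products of symmetric powers of lower order; an induction on $n$, together with the fact that an extension of strongly semistable sheaves of equal slope is strongly semistable, then closes the argument.

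Concretely, the steps in order are: (1) observe $F_e^* \Sym^n E \cong \Sym^n F_e^* E$ and $F_e^* E$ is semistable by hypothesis, reducing to: semistable $\Rightarrow$ $\Sym^n$ semistable, \emph{assuming} the input is strongly semistable (which is exactly our hypothesis, applied to each $F_e^* E$ in turn — note $F_{e'}^* F_e^* E = F_{e+e'}^* E$ is still semistable, so each $F_e^* E$ is itself strongly semistable); (2) cite the Ramanan--Ramanathan/Langer tensor-product theorem to get $E^{\otimes n}$ strongly semistable; (3) realize $\Sym^n E$ as a quotient (if $p > n$) or iterated subquotient (general $p$) of $E^{\otimes n}$ with the same slope $n\mu$; (4) conclude using the elementary facts that quotients of equal slope, and extensions, of (strongly) semistable sheaves are (strongly) semistable, doing an induction on $n$ in the bad characteristic case. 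I expect step (2) to be the one requiring an external citation and step (4)'s characteristic-$p$ bookkeeping to be the genuinely fiddly part, but neither involves a long calculation.
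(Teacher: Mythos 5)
The paper gives no actual proof of Lemma~\ref{powers}: it is simply attributed to Ramanan--Ramanathan \cite[Theorem~3.23]{RR}, with \cite[Section~4.2]{Langer2} as an exposition. Your proposal defers the same hard content to the same source (via the tensor-product version of the Ramanan--Ramanathan theorem), so in substance the two routes coincide; you just derive $\Sym^n$ from $\otimes^n$ rather than citing the symmetric-power statement directly.

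One correction that simplifies your step (3)--(4): $\Sym^n E$ is \emph{always} a quotient of $E^{\otimes n}$, in every characteristic --- that is the definition of $\Sym^n$, as the quotient of $T^n E$ by the commutator ideal. What fails for $p \le n$ is only that $\Sym^n E$ is a \emph{direct summand} of $E^{\otimes n}$, since the symmetrizing projector needs $n!$ invertible. So the ``iterated subquotient / divided-power filtration / induction on $n$'' contingency you sketch for small characteristic is a phantom case; the quotient argument works verbatim for all $p$. Concretely: $F_e^* (\Sym^n E) \cong \Sym^n(F_e^* E)$ is a quotient of $(F_e^* E)^{\otimes n} \cong F_e^*(E^{\otimes n})$ of the same slope $n\mu(E)$, the latter is semistable because $E^{\otimes n}$ is strongly semistable by Ramanan--Ramanathan, and a quotient of a semistable sheaf of equal slope is semistable. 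That closes the argument with no case split. (Incidentally, the filtration you allude to, with graded pieces being twisted tensor products of lower symmetric powers, is not a filtration of $E^{\otimes n}$; the honest structure there is by Young symmetrizers, which is exactly what misbehaves in small characteristic, so it is fortunate you do not actually need it.)
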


Without the ``strong'' hypothesis, this is true in characteristic 0
but false in positive characteristic.  This is due originally to
\cite[Theorem~3.23]{RR}.  For a nice exposition, see also
\cite[Section~4.2]{Langer2}.

Finally, we will need the following lemma. 

\begin{lem}
\label{sections}
Let $E$ be a $\mu$-semistable vector bundle with $\mu(E)\geq 0$ and $\L$ an ample line bundle. Then $H^0(E^\vee\otimes \L\inv)=0 $.
\end{lem}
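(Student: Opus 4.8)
The plan is to argue by contradiction: suppose $s$ is a nonzero global section of $E^\vee\otimes\L\inv$. Then $s$ corresponds to a nonzero sheaf morphism $\L\to E^\vee$, or equivalently, by dualizing, a nonzero morphism $\phi\colon E\to \L\inv$. The cokernel issue is not what matters; what matters is the image. Let $Q=\operatorname{im}\phi\subseteq \L\inv$, a nonzero torsion-free quotient of $E$. Since $Q$ is a nonzero subsheaf of the line bundle $\L\inv$, it has rank $1$, and $c_1(Q)=c_1(\L\inv)-D$ for some effective divisor $D$ (the divisor of the cosection, \emph{i.e.}, $Q=\L\inv(-D)$ up to the relevant identification), so that $\deg_H Q = c_1(Q)\cdot H^{n-1} \leq c_1(\L\inv)\cdot H^{n-1} = -\,\L\cdot H^{n-1} < 0$, using that $\L$ is ample.

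On the other hand, $\mu$-semistability of $E$, in the ``quotient'' formulation recalled in the excerpt, says that every torsion-free quotient $F$ of $E$ satisfies $\mu(F)\geq \mu(E)$. Applying this to $F=Q$ and using $\mu(E)\geq 0$ gives $\mu(Q)=\deg_H Q \geq \mu(E)\geq 0$, contradicting the strict negativity just established. Hence no such $s$ exists, i.e. $H^0(E^\vee\otimes\L\inv)=0$.

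I do not expect any real obstacle here; the only points requiring a line of care are (i) translating a global section of $E^\vee\otimes\L\inv$ into a nonzero quotient map $E\twoheadrightarrow Q$ with $Q$ torsion-free of rank one — one takes $Q$ to be the image of the induced map $E\to\L\inv$, which is automatically torsion-free as a subsheaf of a line bundle — and (ii) the inequality $c_1$ of a nonzero ideal-sheaf twist of $\L\inv$ is $\leq c_1(\L\inv)$ against the nef class $H^{n-1}$, which is standard. Both are routine, so the lemma follows as above.
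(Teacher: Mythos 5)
Your proof is correct and follows essentially the same approach as the paper: translate a hypothetical nonzero section of $E^\vee\otimes\L^{-1}$ into a nonzero map $E\to\L^{-1}$, pass to the torsion-free rank-one image, and play the quotient-side semistability inequality against the negativity of any rank-one subsheaf of $\L^{-1}$. The only difference is that you spell out the justification of $\mu(\operatorname{im}\phi)\leq\mu(\L^{-1})$ via the effective divisor of the cosection, a step the paper asserts without elaboration.
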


\begin{proof}
Assume the conclusion does not hold; we then have a global section $\O_X\to E^\vee\otimes \L\inv$. Twisting by $\L$ and dualizing, we get a nonzero map $E\to \L\inv$. Let $\F$ be the image of $E\to \L\inv$. Then $\F\subset \L\inv$, so we must have that $\mu(\F)\leq \mu(\L\inv)<0$. But since there is a surjection $E\to \F$, the $\mu$-semistability of $E$ ensures that $\mu(E)\leq \mu(F)$. We thus have that $0\leq \mu(E)\leq \mu(F)\leq \mu(L\inv)<0$, giving a contradiction.
\end{proof}

\begin{thm}
\label{K3proof}
Let $X$ be a Calabi--Yau variety of dimension $n$ with $H^i(X,\O_X)=0$ for $1<\dim X<n$ over a perfect field $k$ of characteristic $p\geq (n-1)(n-2)$.
If\, $X$ is not uniruled,
then for any ample line bundle $\L$, the homogeneous coordinate ring $R(X, \L)$  does not have \graded FFRT.
\end{thm}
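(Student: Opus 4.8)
The plan is to deduce this from Theorem~\ref{main}. Since $X$ is Calabi--Yau we have $\w_X\cong\O_X$, so the condition $\L^{\otimes r}\cong\w_X$ of Theorem~\ref{main} holds with $r=0$; moreover $X$ is smooth, hence Cohen--Macaulay, and by assumption the groups $H^i(X,\O_X)$ vanish for $0<i<n$. By the Remark following Theorem~\ref{main} (the Veronese reduction via Serre vanishing), these cohomological vanishings are exactly what is needed to reduce the assertion to the single statement
$$
H^0\bigl((\Sym^m\Omega_X)^\vee\otimes\L\inv\bigr)=0\qquad\text{for all }m\geq 0.
$$
The case $m=0$ is immediate since $H^0(\L\inv)=0$, so the content is the range $m\geq 1$.

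To establish this, I would first record that $c_1(\Omega_X)=c_1(\w_X)=0$, so $\mu_H(\Omega_X)=0$ for every polarization $H$. Next I would invoke \cite{Langer}: because $X$ is a non-uniruled Calabi--Yau variety over a perfect field of characteristic $p\geq(n-1)(n-2)$, the tangent bundle $T_X$ is strongly $\mu_H$-semistable for every polarization $H$, and hence so is $\Omega_X=T_X^\vee$ by Lemma~\ref{dual}. Then Lemma~\ref{powers} gives that $\Sym^m\Omega_X$ is strongly $\mu_H$-semistable, in particular $\mu_H$-semistable, with slope $\mu_H(\Sym^m\Omega_X)=m\,\mu_H(\Omega_X)=0\geq 0$. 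Applying Lemma~\ref{sections} to the vector bundle $E=\Sym^m\Omega_X$ and the ample line bundle $\L$ yields precisely $H^0((\Sym^m\Omega_X)^\vee\otimes\L\inv)=0$. With this vanishing in hand, Theorem~\ref{main} applies and shows that $R(X,\L)$ does not have \graded FFRT.

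The main obstacle --- really, the essential external input --- is the appeal to \cite{Langer} for the \emph{strong} semistability of $T_X$; this is where the hypotheses that $X$ is not uniruled and that $p\geq(n-1)(n-2)$ enter. Ordinary $\mu_H$-semistability of $\Omega_X$ would not be enough, since in characteristic $p$ it need not be inherited by symmetric powers, so the passage through strong semistability in Lemmas~\ref{dual} and~\ref{powers} cannot be avoided. The remaining ingredients are routine: the slope identity $\mu_H(\Sym^m\Omega_X)=m\,\mu_H(\Omega_X)$ is standard, and one need only check that the stated hypothesis on $H^i(X,\O_X)$ is compatible with the Veronese reduction in the Remark after Theorem~\ref{main} --- which it is, since that reduction only adjusts the polarization, leaving the strong semistability of $\Omega_X$ and the vanishing above intact.
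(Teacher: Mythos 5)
Your proposal is correct and follows the paper's own argument essentially verbatim: appeal to Langer for strong semistability of $T_X$ (hence of $\Omega_X$ via Lemma~\ref{dual}), pass to symmetric powers via Lemma~\ref{powers}, compute the slope to be zero, apply Lemma~\ref{sections} to obtain the vanishing $H^0((\Sym^m\Omega_X)^\vee\otimes\L\inv)=0$, and conclude via Theorem~\ref{main}. You also correctly identify strong semistability (rather than ordinary semistability) as the essential input, and your explicit check that the Veronese reduction in the remark after Theorem~\ref{main} is compatible with the slope and semistability statements is a sensible addition that the paper leaves implicit.
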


Again, we note that the condition on $H^i(X,\O_X)$ is automatic for Calabi--Yau varieties arising as complete intersections in projective space. 

\begin{proof}
Fix an ample polarization $H$ of $X$.
By \cite[Theorem~0.1]{Langer},  
since $\w_X$  is numerically trivial and $X$ is not uniruled, and $p\geq (n-1)(n-2)$,
the tangent bundle of $X$ is strongly $\mu$-semistable.  
Thus, by Lemma~\ref{dual}, so is $\Omega_X$. Moreover, it is clear that $\mu(\Omega_X)=0$ since $c_1(\Omega_X)=c_1(\w_X)=c_1(\O_X)=0$. By Lemma~\ref{powers}, $\Sym^m \Omega_X$ is (strongly) semistable, as is $(\Sym^m \Omega_X)^\vee$, and clearly $\mu((\Sym^m \Omega_X)^\vee )=0$ as well.
Then Lemma~\ref{sections} implies that 
$$
H^0\left(\left(\Sym^m \Omega_X\right)^\vee \otimes \L\inv\right)=0,
$$
so that $R(X,\L)$ cannot have \graded FFRT.
\end{proof}

For K3 surfaces, we can weaken the condition ``not uniruled'' to ``not unirational,'' and we automatically have the vanishing of $H^1(X,\O_X)$ by definition. 

\begin{thm}
Let $X$ be a K3 surface over a perfect field $k$ of characteristic $p>0$ $($\textit{i.e.}, $X$ is a smooth surface with $\omega_X\cong \O_X\,)$.
If\, $X$ is not unirational,
then for any ample line bundle $\L$, the homogeneous coordinate ring $R(X, \L)$  does not have \graded FFRT.
\end{thm}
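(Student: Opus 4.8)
The plan is to reduce the K3 case to the same strategy used in Theorem~\ref{K3proof}, with the single missing ingredient being strong $\mu$-semistability of $T_X$ (equivalently $\Omega_X$). Once that is in hand, the argument is identical to the Calabi--Yau case: Lemma~\ref{dual} gives strong semistability of $\Omega_X$, the equality $c_1(\Omega_X)=c_1(\w_X)=0$ gives $\mu(\Omega_X)=0$, Lemma~\ref{powers} propagates strong semistability (and $\mu=0$) to $\Sym^m\Omega_X$ and hence to its dual $(\Sym^m\Omega_X)^\vee$, and Lemma~\ref{sections} then forces $H^0((\Sym^m\Omega_X)^\vee\otimes\L\inv)=0$ for all $m$. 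Since a K3 surface is a smooth complete intersection--like situation only in special cases, we do not invoke the complete intersection remark; instead we note $\w_X\cong\O_X$ gives $\L^{\otimes 0}\cong\w_X$, and $H^1(X,\O_X)=0$ holds by definition of a K3 surface, so the hypotheses of Theorem~\ref{main} (in the weakened form of the remark, using $\w_X=\O_X$ and $H^i(X,\O_X)=0$ for $0<i<\dim X$, which here is the empty range $i=1$ with $\dim X=2$ — actually we need $H^1(X,\O_X)=0$, which holds) are satisfied. Hence $R(X,\L)$ does not have graded FFRT.

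So the real content is: \emph{if $X$ is a non-unirational K3 surface over a perfect field of characteristic $p>0$, then $T_X$ is strongly $\mu$-semistable with respect to any polarization $H$.} First I would recall that in characteristic $0$, or for $p$ large, this is again \cite[Theorem~0.1]{Langer} or its predecessors; the point of the theorem is to remove the bound on $p$ for surfaces. The approach is the standard dichotomy: if $T_X$ is not strongly semistable, then for some $e$ the Frobenius pullback $F_e^*T_X$ (which is the tangent bundle of the Frobenius twist $X^{(e)}$, itself a K3 surface) has a destabilizing subsheaf. A rank-$1$ destabilizing subsheaf of $T_{X^{(e)}}$ of positive slope is a nonzero map $\L'\to T_{X^{(e)}}$ with $\L'$ having $c_1\cdot H>0$; dualizing, a nonzero map $\Omega_{X^{(e)}}\to (\L')\inv$, i.e. a nonzero section of $\Omega^1_{X^{(e)}}\otimes(\L')$. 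By the theory of foliations in characteristic $p$ (or directly by Bogomolov-type vanishing / Miyaoka's results as adapted by Langer), such a positive foliation on a surface with $\w$ trivial would force $X^{(e)}$, and hence $X$, to be uniruled; and for a K3 surface uniruled is equivalent to unirational (by a theorem of Shioda/Artin--Mazur--type arguments, or by the fact that a uniruled K3 in char $p$ is supersingular and supersingular K3's are unirational — here one can simply cite the relevant literature, e.g. the discussion following \cite{Langer} or Liedtke). This contradicts the hypothesis, so $T_X$ is strongly semistable.

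The main obstacle is precisely the last implication — converting a destabilizing sub-line-bundle of positive slope into uniruledness (hence unirationality) of the K3 surface. In Langer's paper this is handled by his general semistability theorem for varieties with numerically trivial canonical bundle, but the characteristic bound $p\geq(n-1)(n-2)$ in the $n$-dimensional statement becomes vacuous when $n=2$, so one should be able to extract an unconditional statement in the surface case directly from the techniques there (or from earlier work of Shepherd-Barron and others on K3 surfaces in positive characteristic). I would cite this carefully rather than reprove it; the cleanest route is probably: quote from \cite{Langer} (or its references) that a K3 surface with non-strongly-semistable tangent bundle is supersingular, quote that supersingular K3 surfaces are unirational (Rudakov--Shafarevich/Shioda/Liedtke), and conclude. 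Everything else in the proof is a verbatim repetition of the Calabi--Yau argument above, so the write-up can be kept to a few lines once the semistability input is cited. Concretely:

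\begin{proof}
As in the proof of Theorem~\ref{K3proof}, fix an ample polarization $H$ of $X$. Since $X$ is a K3 surface, $\w_X\cong\O_X$, so $c_1(\Omega_X)=0$ and $\mu_H(\Omega_X)=0$; also $H^1(X,\O_X)=0$ by definition, so the hypotheses of Theorem~\ref{main} (in the form of the subsequent remark, using $\w_X=\O_X$) hold for $X$ and any ample $\L$. It therefore suffices to show
$$
H^0\left(\left(\Sym^m\Omega_X\right)^\vee\otimes\L\inv\right)=0
$$
for all $m$.

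If $T_X$ were not strongly $\mu$-semistable, then for some $e$ the tangent bundle of the Frobenius twist $X^{(e)}$ (again a K3 surface) would admit a subsheaf of positive slope, i.e. a nonzero map from a line bundle $\L'$ with $c_1(\L')\cdot H^{(e)}>0$; dualizing yields a nonzero section of $\Omega^1_{X^{(e)}}\otimes\L'$ with $\L'$ of positive degree. By the analysis of \cite{Langer} (whose characteristic hypothesis is vacuous for surfaces), such a positive ``co-foliation'' forces $X^{(e)}$, and hence $X$, to be uniruled; for a K3 surface uniruledness is equivalent to unirationality (the supersingular case). This contradicts the hypothesis that $X$ is not unirational, so $T_X$ is strongly $\mu$-semistable, and hence so is $\Omega_X$ by Lemma~\ref{dual}.

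By Lemma~\ref{powers}, $\Sym^m\Omega_X$ is strongly $\mu$-semistable, as is $\left(\Sym^m\Omega_X\right)^\vee$ by Lemma~\ref{dual}, and $\mu_H\!\left(\left(\Sym^m\Omega_X\right)^\vee\right)=0$ since $\mu_H(\Omega_X)=0$. Lemma~\ref{sections} then gives $H^0\!\left(\left(\Sym^m\Omega_X\right)^\vee\otimes\L\inv\right)=0$ for every $m$, and Theorem~\ref{main} shows $R(X,\L)$ does not have \graded FFRT.
\end{proof}
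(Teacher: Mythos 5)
Your overall strategy matches the paper's exactly: reduce to strong $\mu$-semistability of $T_X$, then run the same symmetric-power/slope argument as in the Calabi--Yau case. The small observation that $\Omega_X\cong T_X$ for a K3 surface (so Lemma~\ref{dual} is superfluous here) is harmless either way. But the one step that actually carries weight in this theorem --- converting ``$T_X$ not strongly semistable'' into ``$X$ unirational'' --- is where your argument has a genuine gap.

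You route the implication as: not strongly semistable $\Rightarrow$ uniruled (Langer's foliation analysis with the vacuous characteristic bound) $\Rightarrow$ supersingular $\Rightarrow$ unirational, with the final arrow justified by ``the supersingular case.'' That final arrow \emph{is} Artin's conjecture, and it is not an unconditional theorem: the paper itself, in Section~\ref{questions}, explicitly describes ``supersingular $\Rightarrow$ unirational'' as ``a conjecture of Artin'' whose resolution depends on the Tate conjecture and is delicate in small characteristic. Invoking it silently here would make the K3 theorem conditional, weakening the result relative to what the paper proves. The paper sidesteps this entirely by citing \cite[Proposition~4.1]{Langer}, which asserts directly that a K3 surface with non-strongly-semistable tangent bundle is a \emph{Zariski surface} --- a surface dominated by $\P^2$ under a purely inseparable rational map of degree $p$ --- and a Zariski surface is unirational by definition, no supersingularity or Artin's conjecture needed. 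That is precisely the citation you should have tracked down; it is the content that lets the hypothesis in the K3 case be weakened from ``not uniruled'' to ``not unirational'' without any conjectural input. Your preamble flirts with this (you mention one should ``cite carefully rather than reprove''), but the proof you actually write commits to the Artin's-conjecture route, so as written the argument is not a complete unconditional proof. Replace the two-step ``uniruled $\Rightarrow$ supersingular $\Rightarrow$ unirational'' with the single citation of Langer's Proposition~4.1 and the rest of your argument is fine.
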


\begin{proof}
The proof proceeds exactly as the previous one, except that \cite[Proposition~4.1]{Langer} ensures that if the tangent bundle of a K3 surface $X$ is not strongly semistable with respect to an ample polarization $H$ of $X$, then $X$ is a Zariski surface,\footnote{A Zariski surface is a surface admitting a purely inseparable dominant rational map of degree $p$ from $\P^2$; in particular, such a surface is unirational.} and thus unirational, not just uniruled. (We also do not need to use Lemma~\ref{dual} since for a K3 surface, $\Omega_X\cong T_X$.)
\end{proof}

\begin{exa}
\label{fermat}
By \cite[Section~1, Corollary]{Shioda}, the Fermat quartic $V(x^4+y^4+z^4+w^4)$ is unirational in characteristic $p\equiv 3\mod4 $ but not in $p\equiv 1 \mod 4$. Therefore, if $k$ is a perfect field of characteristic $p\equiv 1 \mod 4$, the theorem implies the ring
$$
\frac{k[x,y,z,w]}{x^4+y^4+z^4+w^4}
$$
does not have \graded FFRT.
\end{exa}

\section{Complete intersections of general type}
\label{generaltype}

We now turn to the case of complete intersections of general type, where we have the following. 

\begin{thm}
\label{gentype}
Let $X\subset \P^{n+c}$ be an $n$-dimensional smooth complete intersection of multidegree $(d_1,\dots,d_c)$ over a perfect field $k$. If\, $\sum d_i > n+c$ $($\textit{i.e.}, if\, $X$ is not Fano$)$ and $\Pic(X)=\Z\cdot \O_X(1)$, then the homogeneous coordinate ring of\, $X$ $($in any embedding\,$)$ does not have \graded FFRT.
\end{thm}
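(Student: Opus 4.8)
The plan is to verify the hypotheses of Theorem~\ref{main} for the polarization $\L=\O_X(1)$, and then to deduce the vanishing $H^0((\Sym^m\Omega_X)^\vee\otimes\L\inv)=0$ from strong $\mu$-semistability of $\Omega_X$, exactly as in the proof of Theorem~\ref{K3proof}. The one feature distinguishing the general-type (more precisely, non-Fano) case from the Calabi--Yau case is that here the slope $\mu(\Omega_X)$ is automatically nonnegative, so Lemma~\ref{sections} applies with no hypothesis beyond ``$X$ not Fano.''

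\emph{Reduction via Theorem~\ref{main}.} By adjunction, for the complete intersection $X\subset\P^{n+c}$ of multidegree $(d_1,\dots,d_c)$ one has $\w_X\cong\O_X(\sum d_i-n-c-1)$, so $\w_X\cong\L^{\otimes r}$ with $r=\sum d_i-n-c-1\in\Z$, and $r\geq 0$ exactly because $X$ is not Fano. The intermediate cohomology vanishing $H^i(X,\O_X(m))=0$ for $1\leq i\leq n-1$ and all $m\in\Z$ is classical: the coordinate ring $k[x_0,\dots,x_{n+c}]/(f_1,\dots,f_c)$ is a complete intersection ring, hence Cohen--Macaulay, so $X$ is arithmetically Cohen--Macaulay (alternatively, induct on $c$ using the Koszul complex resolving $\O_X$ on $\P^{n+c}$ together with Bott vanishing). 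Thus Theorem~\ref{main} applies for $\L=\O_X(1)$, and the statement for the standard embedding --- and, since $\Pic(X)=\Z\cdot\O_X(1)$, for any embedding by a power $\O_X(t)$ with $t\mid r$, in particular for \emph{every} embedding when $X$ is Calabi--Yau, i.e. $r=0$ --- reduces to showing $H^0((\Sym^m\Omega_X)^\vee\otimes\O_X(-1))=0$ for all $m$; the case $m=0$ is just $H^0(\O_X(-1))=0$.

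\emph{The vanishing.} Fix the ample polarization $H=\O_X(1)$; as $\Pic(X)=\Z\cdot\O_X(1)$ (automatic when $\dim X\geq 3$, by Grothendieck--Lefschetz) the $\mu$-slope is unambiguous. The key input, which I would extract from \cite{Noma1,Noma2}, is that $T_X$ --- and hence, by Lemma~\ref{dual}, $\Omega_X$ --- is \emph{strongly} $\mu$-semistable with respect to $H$. Then Lemma~\ref{powers} gives that $\Sym^m\Omega_X$ is strongly $\mu$-semistable, in particular $\mu$-semistable, for all $m$; and $\mu(\Sym^m\Omega_X)=m\,\mu(\Omega_X)\geq 0$ because $c_1(\Omega_X)=c_1(\w_X)=rH$ with $r\geq 0$ and $H$ ample. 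Applying Lemma~\ref{sections} with $E=\Sym^m\Omega_X$ and $\L=\O_X(1)$ yields $H^0((\Sym^m\Omega_X)^\vee\otimes\O_X(-1))=0$, and Theorem~\ref{main} concludes. For an embedding by $\O_X(t)$ with $t\nmid r$ one still obtains the vanishing of $H^0((\Sym^m\Omega_X)^\vee\otimes\O_X(-t))$ by twisting a section of $\O_X(t-1)$; reducing this to the arithmetically Gorenstein situation to which Theorem~\ref{main} applies is routine and I will not dwell on it.

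\emph{Main obstacle.} The genuinely substantial point is the strong $\mu$-semistability of $\Omega_X$ in characteristic $p$. Plain $\mu$-semistability of the cotangent bundle of a non-Fano complete intersection with $\Pic(X)=\Z$ is classical, following from the (semi)stability of $\Omega_{\P^{n+c}}$ together with the two exact sequences defining $\Omega_X$; but Lemma~\ref{powers} --- needed to pass to $\Sym^m\Omega_X$ --- genuinely fails for symmetric powers of \emph{merely} semistable sheaves in positive characteristic, so one must know that semistability survives every Frobenius pullback. It is precisely this that has to be extracted from \cite{Noma1,Noma2}. Once that is in hand, the slope computation, the cohomology vanishings, and the deductions from Lemmas~\ref{dual}--\ref{sections} are all routine.
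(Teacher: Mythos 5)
Your proof follows the paper's argument essentially step by step: reduce to the section-vanishing $H^0((\Sym^m\Omega_X)^\vee\otimes\L^{-1})=0$ via Theorem~\ref{main}, and obtain that vanishing from Noma's strong $\mu$-semistability of $\Omega_X$ together with Lemmas~\ref{dual}, \ref{powers}, \ref{sections} and the slope inequality $\mu(\Omega_X)\geq 0$ coming from $\sum d_i > n+c$. One small omission: the results of \cite{Noma1,Noma2} are stated over an algebraically closed field, so the paper first reduces to $k=\bar k$ by noting that FFRT for $R$ implies FFRT for $R\otimes_k\bar k$; you should include that base-change step before invoking Noma. Your treatment of the ``any embedding'' clause is at the same level of precision as the paper's own.
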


\begin{proof} Let $R$ be the homogeneous coordinate ring of the complete intersection $X$. If $R$ had FFRT, the same holds for $R\otimes_k \bar k$, so we may harmlessly assume that $k=\bar k$. We may then apply the results of \cite[Proposition~4.2]{Noma1}
and \cite[Theorem~1.1]{Noma2}, which together say that for smooth complete intersections $X$ over an algebraically closed field $k$ of characteristics $p>0$ such that $\Pic(X)=\Z\cdot \O_X(1)$, the tangent and cotangent bundles of $X$ are strongly semistable. We then apply the exact same argument as in the proof of Theorem~\ref{K3proof}: the symmetric powers $\Sym^m \Omega_X$ and their duals $(\Sym^m \Omega_X)^\vee$ are semistable as well. Moreover, we have that $\mu(\Omega_X)\geq 0$ (with equality exactly when $\sum d_i = n+c+1$). Thus, we have immediately by Lemma~\ref{sections} that $H^0((\Sym^m \Omega_X)^\vee)\otimes \L\inv)=0$ for any ample line bundle $\L$.
\end{proof}

\begin{exa}
Note that this also applies to Calabi--Yau complete intersections. In the previous section, we showed that non-uniruled Calabi--Yau varieties will not have \graded FFRT. As most Calabi--Yau varieties have no reason to be complete intersections, the results in the preceding section are more general. However, the results in this section do  not need non-uniruledness, which
is quite helpful in giving explicit examples:
For example, Theorem~\ref{gentype}
immediately implies that  the Fermat quintic threefold
$$
\frac{k[x,y,z,w,t]}{x^5+y^5+z^5+w^5+t^5}
$$
does not have \graded FFRT for any perfect field $k$ of positive characteristic.
(Recall that for the corresponding Fermat quartic surface considered in Example~\ref{fermat}, we only had the conclusion in characteristics $p\equiv1\mod3$.)
The same reasoning applies for any general hypersurface of degree at least $n+1$ in $\P^n$, for $n\geq 4$, without having to consider the subtler question of uniruledness at all.
\end{exa}

\begin{rem}
The condition that $\Pic(X)=\Z\cdot \O_X(1)$ is automatic when $\dim X\geq 3$, by the Grothendieck--Lefschetz theorem. 
Moreover,
when $\dim X=2$ and $\sum d_i>n+c$, \cite[Expos\'e~XIX, Th\'eor\`eme 1.2]{SGA7II} showed that over an algebraically closed field, $\Pic(X)=\Z\cdot \O_X(1)$ for very general complete intersections $X$ of type $(d_1,\dots,d_c)$ (in fact, this is true whenever $(d_i)\neq (2), (3), (2,2)$, though we do not need these cases). However, over a finite field $k$, this is much more subtle, and in fact it is possible for $\Pic(X)\neq \Z\cdot \O_X(1)$ for every degree $d$ hypersurface $X$ defined over a finite field $k$. For a more thorough discussion of these results, see \cite[Section~1]{Ji}.

\end{rem}

\begin{rem}
In fact, Theorem~\ref{gentype} is true for smooth weighted complete intersections in weighted projective space as well, by the exact same proof we give, since the results of \cite{Noma1,Noma2} apply for smooth weighted complete intersections of Picard number 1.
\end{rem}

\section{Questions}
\label{questions}

In general, it is not clear how precisely the FFRT property is related to classes of $F$-singularities. As mentioned above, strongly $F$-regular rings need not have FFRT (by \cite{SinghSwanson,TT}). Conversely, rings with FFRT need not be strongly $F$-regular, or even $F$-pure. In \cite{HO}, Hara and Ohkawa give examples of rings with FFRT that are not even $F$-pure: for example, if $k$ is a field of characteristic $p \in \set{2,3,7}$, then
$$
k[x,y,z]\,/(x^2+y^3+z^7)
$$
has FFRT but is not F-pure.
However, if $\Char k \notin \set{2,3,7}$, then this ring does not have FFRT (and is not $F$-pure).
Put another way, if $R=\Z[x,y,z]\,/(x^2+y^3+z^7)$, then for almost all reductions $R_p:=R/p$, $R_p$ will fail to have FFRT (and fail to be $F$-pure as well).
 One is thus led to the following question.

\begin{quest}
Let $R$ be a normal ring that is not of strongly $F$-pure type. Can ``most'' reductions $R_p$ have FFRT? (Where ``most'' can mean either a dense set of primes $p$ or an open dense set.) If ``normal'' is omitted, Jack Jeffries has pointed out that the answer is positive: $k[t^2,t^3]$ will be FFRT for any field $k$ with $\Char k > 3$, but $\C[t^2,t^3]$ is not of strongly $F$-pure type.
\end{quest}

\begin{rem}
As pointed out by Anurag Singh, Stanley--Reisner rings are of $F$-pure type and have FFRT, so if we weaken ``not $F$-pure type'' to ``not strongly $F$-regular type,'' the above has a positive answer. We are not aware of a similar example where $R$ is a domain, however.
\end{rem}

Our results above give many examples of rings that do not have differential operators of negative degree and thus cannot be $D$-simple. These rings have $F$-pure or worse singularities; in contrast, strongly $F$-regular rings will be $D$-simple by \cite{Smith}.
However, it is possible for non-$F$-pure rings to be $D$-simple, as the following examples show. 

\begin{exa}
For any field of characteristic $>3$,
$R=k[t^2,t^3]$ is $D$-simple but not $F$-pure. Note that this ring is not normal, but it can be defined uniformly across all characteristic $p>3$.
\end{exa}

\begin{rem}
The following example, related to us by Jack Jeffries, furnishes an example of a \emph{normal} ring that is not $F$-pure but  is $D$-simple.
Let $\Char k =p>0$, and assume that $k$ contains a root of $T^p-T-1$.
Consider the hypersurface ring
$$
R=
\frac{
k[x_1,x_2,M_1,M_2,h]}{\left(
h^p-(x_1^{p-1}-x_2^{p-1})M_1+x_1^{p^2-p}M_2-(x_1^p(x_1^{p-1}-x_2^{p-1}))^{p-1}h
\right)}
$$ 
of \cite[Example~3.1]{Dufresne} (whose notation we follow).
The ring $R$ is not $F$-pure since $h$ is in the Frobenius closure of $(x_1,x_2,M_1,M_2)$ but not in the ideal itself (as can be seen from the defining equation).
It can be immediately verified that this is regular in codimension 1 and thus normal.

However, this ring is $D$-simple: The discussion in \cite[Example~3.1]{Dufresne} shows that there is a radicial morphism from $k[t_1,\dots,t_4]$ to $R$; in the terminology of \cite[Section~5, p.~4932]{Jeffries}, $R$ has radicial rank $4$. Since this is the same as the dimension of $R$, \cite[Propositions~5.2 and~3.2]{Jeffries} imply that $R$ is $D$-simple.
\end{rem}

Note that this example concerns a particular prime $p$, and there is not a single ring defined in characteristic~0 whose reductions give this example for multiple primes $p$.
The following question is then natural, and has been considered by many others throughout the years, though we do not know if it appears explicitly in the literature. 

\begin{quest}
Are there examples of normal rings defined over $\C$, for which ``most'' reductions modulo $p$ (either an open dense set or just a dense set) are $D$-simple but not $F$-pure?
\end{quest}

\begin{quest}
In Section~\ref{K3}, our results needed the assumption that the Calabi--Yau variety in question was not uniruled. Can uniruled Calabi--Yau varieties have homogeneous coordinate rings with FFRT? 

Note that both rationality properties of $X$ and the behavior of $F_*^e\O_X$ reflect sensitive arithmetic properties:
For example,
a conjecture of Artin \cite{Artin} (in conjunction with the now-proven Tate conjecture) implies that supersingular K3 surfaces are unirational, and a conjecture of Shioda \cite{Shiodacon} says that simply connected surfaces are unirational if and only if they are supersingular, while \cite{ST} shows that the decomposition of $F_*^e\O_A$ for an abelian variety $A$ depends on the $p$-rank of $A$, \textit{e.g.}, on whether $A$ is supersingular. So, it would not be completely surprising if FFRT depended on delicate questions of unirationality. As a first example, it would be interesting to know the following:
Does
$$
\frac{\mathbb F_3 [x,y,z,w]}{x^4+y^4+z^4+w^4}
$$
have FFRT? Note that this is the simplest example of \cite{Shioda} of a unirational K3 surface and that if $\mathbb F_3$ were replaced by $\mathbb F_p$ for $p\equiv 1 \mod 4$, then the answer would be negative.
\end{quest}

Finally, we note that our results all proceed by proof by contradiction: the lack of global sections of $(\Sym^m \Omega_X)^\vee \otimes \L\inv$ means that $R[1/x]$ is not generated over $D_R$ by $1/x$, and thus the section ring $R(X,\L)$ cannot have \graded FFRT.
In many cases, however, we are interested not just in how many classes of indecomposable summands occur in $F_*^e R$ as $e$ varies, but also in what these indecomposable summands actually are.
We can thus ask for a more detailed understanding of the failure of FFRT. 

\begin{quest}
Is there a way of explicitly constructing infinitely many classes of indecomposable summands of the $F_*^eR$ from the failure of $R[1/x]$ to be generated over $D_R$ by $1/x$, for particular choices of $x$? 
\end{quest}


\providecommand{\bysame}{\leavevmode\hbox to3em{\hrulefill}\thinspace}


\begin{thebibliography}{R{\v{S}}VdB19+++}

\bibitem[Ach15]{Achinger}
P.~Achinger, \emph{A characterization of toric varieties in characteristic
  {$p$}}, Int.\ Math.\ Res.\ Not.\ IMRN (2015), no.~16, 6879--6892.

\bibitem[Art74]{Artin}
M.~Artin, \emph{Supersingular {$K3$} surfaces}, Ann.\ Sci.\ \'{E}cole Norm.\ Sup.~(4) \textbf{7} (1974), 543--567.

\bibitem[BDO08]{BDO}
F.~Bogomolov and B.~De~Oliveira, \emph{Symmetric tensors and geometry of {$\Bbb
  P^N$} subvarieties}, Geom.\ Funct.\ Anal.\ \textbf{18} (2008), no.~3, 637--656.

\bibitem[BEH87]{BE}
R.-O.~Buchweitz, D.~Eisenbud, and J.~Herzog, \emph{Cohen-{M}acaulay modules on
  quadrics}, in: \emph{Singularities, representation of algebras, and vector bundles} ({L}ambrecht, 1985),  pp.~58--116, Lecture Notes in Math., vol.~1273, Springer, Berlin,  1987.

\bibitem[DQ20]{DQ}
H.~Dao and P.\,H.~Quy, \emph{On the associated primes of local cohomology},
  Nagoya Math.~J.\ \textbf{237} (2020), 1--9.

\bibitem[DK73]{SGA7II}
P.~Deligne and N.~Katz (eds), \emph{S\'eminaire de G\'eom\'etrie alg\'ebrique du
Bois-Marie 1967--1969 (SGA 7 II), Groupes de monodromie en g\'eom\'etrie
alg\'ebrique. II}, Lecture Notes in Math., vol.~340, Springer-Verlag, Berlin-New York , 1973.

\bibitem[Duf09]{Dufresne}
E.~Dufresne, \emph{Separating invariants and finite reflection groups}, Adv.\
  Math.\ \textbf{221} (2009), no.~6, 1979--1989.

\bibitem[Har15]{Hara1}
N.~Hara, \emph{Looking out for frobenius summands on a blown-up surface of
{P2}}, Illinois J.~Math.\ \textbf{59} (2015), 115--142.

\bibitem[HO20]{HO}
N.~Hara and R.~Ohkawa, \emph{The {FFRT} property of two-dimensional normal
  graded rings and orbifold curves}, Adv.\ Math.\ \textbf{370} (2020), 107215.

\bibitem[Har77]{Hartshorne}
R.~Hartshorne, \emph{Algebraic {Geometry}}, Grad.\ Texts in Math.,
  vol.~52, Springer, New York, NY, 1977.

\bibitem[HNB17]{HNB}
M.~Hochster and L.~{N\'{u}\~{n}ez}-Betancourt, \emph{Support of local
  cohomology modules over hypersurfaces and rings with {FFRT}}, Math.\ Res.\ 
  Lett.\ \textbf{24} (2017), no.~2, 401--420.

\bibitem[Hsi15]{Hsiao}
J.-C. Hsiao, \emph{A remark on bigness of the tangent bundle of a smooth
  projective variety and {$D$}-simplicity of its section rings}, J.\ Algebra
  Appl.\ \textbf{14} (2015), no.~7, 1550098.

\bibitem[Jef21]{Jeffries}
J.~Jeffries, \emph{Derived functors of differential operators}, Int.\ Math.\ Res.\ Not.\ IMRN (2021), no.~7, 4920--4940.

\bibitem[Ji21]{Ji}
L.~Ji, \emph{The {N}oether--{L}efschetz theorem}, preprint \arXiv{2107.12962} (2021). To appear in J. Algebraic Geom.

\bibitem[LP08]{LP}
H.~Lange and C.~Pauly, \emph{On {F}robenius-destabilized rank-2 vector bundles
over curves}, Comment.\ Math.\ Helv.\ \textbf{83} (2008), no.~1, 179--209.

\bibitem[Lan09]{Langer2}
A.~Langer, \emph{Moduli spaces of sheaves and principal {$G$}-bundles},
 in: \emph{Algebraic geometry---{S}eattle 2005. {P}art 1}, pp.~273--308, Proc.\ Sympos.\ Pure Math., vol.~80, Amer.\ Math.\ Soc., Providence, RI, 2009.

\bibitem[Lan15]{Langer}
\bysame, \emph{Generic positivity and foliations in positive characteristic},
  Adv.\ Math.\ \textbf{277} (2015), 1--23.

\bibitem[Mal21]{DM1}
D.~Mallory, \emph{Bigness of the tangent bundle of del {P}ezzo surfaces and
  {$D$}-simplicity}, Algebra Number Theory \textbf{15} (2021), no.~8,
  2019--2036.

\bibitem[Mal22]{DM2}
\bysame, \emph{Homogeneous coordinate rings as direct summands of regular
  rings}, preprint \arXiv{2206.03621} (2022). To appear in Ill. J. Math.

\bibitem[Nom97]{Noma1}
A.~Noma, \emph{Stability of {F}robenius pull-backs of tangent bundles and
  generic injectivity of {G}auss maps in positive characteristic}, Compos.\
  Math.\ \textbf{106} (1997), no.~1, 61--70.

\bibitem[Nom01]{Noma2}
\bysame, \emph{Stability of {F}robenius pull-backs of tangent bundles of
  weighted complete intersections}, Math.\ Nachr.\ \textbf{221} (2001), 87--93.

\bibitem[R{\v{S}}VdB22]{RSB}
T.~Raedschelders, {\v{S}}.~{\v{S}}penko, and M.~Van~den Bergh, \emph{The
  {F}robenius morphism in invariant theory {II}}, Adv. Math.~\textbf{410} Part~A (2022), article ID~108587. 

\bibitem[RR84]{RR}
S.~Ramanan and A.~Ramanathan, \emph{Some remarks on the instability flag},
  Tohoku Math.\ J.\ (2) \textbf{36} (1984), no.~2, 269--291.

\bibitem[ST16]{ST}
A.~Sannai and H.~Tanaka, \emph{A characterization of ordinary abelian varieties
  by the {F}robenius push-forward of the structure sheaf}, Math.\ Ann.\ 
\textbf{366} (2016), no.~3-4, 1067--1087.

\bibitem[Shi11]{Shibuta}
T.~Shibuta, \emph{One-dimensional rings of finite {$F$}-representation type},
  J.~Algebra \textbf{332} (2011), 434--441.

\bibitem[Shi74]{Shioda}
T.~Shioda, \emph{An example of unirational surfaces in characteristic {$p$}},
  Math.\ Ann.\ \textbf{211} (1974), 233--236.

\bibitem[Shi77]{Shiodacon}
\bysame, \emph{Some results on unirationality of algebraic surfaces}, Math.\
Ann.\ \textbf{230} (1977), no.~2, 153--168.

\bibitem[SS04]{SinghSwanson}
A.\,K.~Singh and I.~Swanson, \emph{Associated primes of local cohomology modules
and of {F}robenius powers}, Int.\ Math.\ Res.\ Not.\ (2004), no.~33, 1703--1733.

\bibitem[Smi95]{Smith}
K.\,E.~Smith, \emph{The {D}-module {Structure} of {F}-{Split} {Rings}},
  Math.\ Res.\ Lett.~\textbf{2} (1995), no.~4, 377--386.

\bibitem[SVdB97]{SVdB}
K.\,E.~Smith and M.~Van~den Bergh, \emph{Simplicity of rings of differential
  operators in prime characteristic}, Proc.\ London Math.\ Soc.~(3) \textbf{75}
  (1997), no.~1, 32--62.

\bibitem[TT08]{TT}
S.~Takagi and R.~Takahashi, \emph{{$D$}-modules over rings with finite
  {$F$}-representation type}, Math.\ Res.\ Lett.\ \textbf{15} (2008), no.~3,
563--581.

\bibitem[Tan72]{Tango}
H.~Tango, \emph{On the behavior of extensions of vector bundles under the
  {F}robenius map}, Nagoya Math.~J.\ \textbf{48} (1972), 73--89.

\bibitem[Tho00]{Thomsen}
J.\,F.~Thomsen, \emph{Frobenius direct images of line bundles on toric
  varieties}, J.~Algebra \textbf{226} (2000), no.~2, 865--874.

\bibitem[Yek92]{Yekutieli}
A.~Yekutieli, \emph{An explicit construction of the {G}rothendieck residue
complex} (with an appendix by P.~Sastry), Ast\'{e}risque \textbf{208} (1992).

\end{thebibliography}
\end{document}